\documentclass[12pt]{amsart}
\usepackage[american]{babel}
\usepackage{amsmath}
\usepackage{latexsym}
\usepackage{amssymb}
\usepackage[shortlabels]{enumitem}
\usepackage{tikz}
\usetikzlibrary{matrix,arrows,decorations.pathmorphing}
\usepackage{tikz-cd}
\usepackage[noadjust]{marginnote}
\usepackage{mathtools,url}
\usepackage{comment}
\DeclareMathAlphabet\mathbfcal{OMS}{cmsy}{b}{n}
\setlength{\textwidth}{160mm}
\setlength{\textheight}{200mm}
\setlength{\oddsidemargin}{0pt}
\setlength{\evensidemargin}{0pt}
\setlength{\topmargin}{0pt}
\hyphenation{Mou-fang}
\theoremstyle{plain}
\newtheorem{Thm}[subsection]{Theorem}
\newtheorem{Cor}[subsection]{Corollary}
\newtheorem{Prop}[subsection]{Proposition}
\newtheorem{Lem}[subsection]{Lemma}

\theoremstyle{definition}
\newtheorem{Construction}[subsection]{Construction}
\newtheorem{Ex}[subsection]{Example}
\newtheorem{Rem}[subsection]{Remark}
\newtheorem{Def}[subsection]{Definition}

\newtheorem{Caveat}[subsection]{Caveat}

\DeclareMathOperator{\Hom}{Hom}

\DeclareSymbolFont{yhlargesymbols}{OMX}{yhex}{m}{n} \DeclareMathAccent{\yhwidehat}{\mathord}{yhlargesymbols}{"62}

\renewcommand{\phi}{\varphi}
\newcommand{\RR}{\mathbb{R}}

\newcommand{\ZZ}{\mathbb{Z}}
\newcommand{\NN}{\mathbb{N}}
\newcommand{\TT}{\mathbb{T}}
\newcommand{\id}{\mathrm{id}}

\newcommand{\supp}{\mathrm{supp}}

\renewcommand{\emptyset}{\varnothing}
\renewcommand{\setminus}{-}

\newcommand{\lca}{\mathsf{lca}}
\newcommand{\apl}{\mathsf{apl}}
\newcommand{\tg}{\mathsf{tg}}
\newcommand{\tab}{\mathsf{tab}}

\newcommand{\ab}{\mathsf{ab}}
\newcommand{\kab}{\mathsf{kab}}
\newcommand{\ktg}{\mathsf{ktg}}

\newcommand{\cT}{\mathcal T}
\newcommand{\cS}{\mathcal S}

\newcommand{\doublewidehat}[1]{\widehat{\widehat{#1\, }}}

\usepackage[colorinlistoftodos,prependcaption,textsize=tiny]{todonotes}

\title{\bf{Some notes on Pontryagin Duality of abelian topological groups}}
\author{Karl Heinrich Hofmann and Linus Kramer}


\begin{document}

\begin{abstract}
A k-group is a topological group $K$ with the property that every algebraic homomorphism
  $f:K\to H$ is continuous, provided that $f$ is continuous on every compact
  subset $C\subseteq K$. We show that for each topological group $G$ there is a topological
  k-group $kG$ arising functorially from $G$ by refining the topology.
  For each abelian pro-Lie k-group $G$ we show that
 the evaluation morphism $\eta_G\colon G\to \doublewidehat{G}$ is an
 isomorphism. An example due to Leptin shows that $\eta_G$ need not be continuous
 if the pro-Lie group $G$ is not a k-group.
 However, a result due to Aussenhofer shows that $\eta_G$ is then an open and bijective map.
 We observe that the openness of $\eta_G$ follows also from a categorical viewpoint.
\end{abstract}


\maketitle

Some observations on
topological abelian groups  and their duality appear to
be useful to recall, notably those with 
an emphasis on commutative pro-Lie groups. Some of
our comments appear to be new and some of the items we
note here have been known for some time, yet appear to
be worthwhile to be recalled. A good deal of the 
necessary  background
is  available in sources like \cite{hofi} and \cite{hofii}.

The circle group $\RR/\ZZ=\TT$ (written additively), for each
abelian topological group $G$, gives rise to its 
{\it character group } \[\widehat G=\Hom(G,\RR/\ZZ),\] which is again
a topological abelian group when we endow it with the
topology of uniform convergence on compact subsets of $G$.
The repetition of this first step leads to the creation of
the bidual $\doublewidehat{G}=\Hom(\Hom(G,\RR/\ZZ),\RR/\ZZ)$, and if $g\in G$
and $\chi\in\widehat G$ yield the element $\chi(g)\in\RR/\ZZ$, then
the following {\it evaluation homomorphism}
is immediately present:
\[\eta_G\colon G\to \doublewidehat{G}\text{ defined by }
\eta_G(g)(\chi)=\chi(g).\]
A central portion of the classical theory of \textsc{Pontryagin Duality}
is the statement that

\smallskip\noindent
(A) {\it $\eta_G$ is an isomorphism (algebraically and
topologically) whenever $G$ is {\it locally compact}.}

\smallskip
Yet many items in the literature lead us beyond these limitations.
Another significant
aspect of that classical duality is that

\smallskip \noindent
(B) {\it the category of locally compact abelian groups $\lca$ is closed
under the  passage from $G$ to $\widehat G$.}

\smallskip
While significant progress about duality of abelian topological groups
beyond local compactness was indeed achieved (as documented in the recent
monograph \cite{adgb}
by \textsc{Aussenhofer}, \textsc{Dikranjan} and \textsc{Giordano Bruno}),
 the overall picture is neither clear nor complete. 
As a test of this claim we address here what is known
for the category of \emph{abelian
pro-Lie groups} $\apl$ in the regard of duality.
This category, which contains
all $\lca$ groups, is in contrast to
the category of $\lca$ groups complete, that is, closed under the
formation of all limits. 
It contains the category of all weakly complete real vector spaces,
being isomorphic to $\RR^I$ (for an arbitrary set $I$) with the
product topology. If $G=\RR^I$, then $\widehat G\cong\RR^{(I)}$ (the
direct sum of cardinality $I$-many copies of $\RR$ with the finest
locally convex vector space topology) does not belong to the category
of pro-Lie groups. This example illustrates that, even on an
elementary level, \emph{the category of
abelian pro-Lie groups $\apl$ differs significantly from its dual category}. 
Even though some 
information is provided in Chapter 4 of \cite{hofii}, the status
of information on duality of $\apl$ groups is far from satisfactory.
This encourages us  to offer in this note
some complementary pieces of information
on that duality.

It will be useful 
to discuss the details of a class of  examples of
prodiscrete
(hence pro-Lie) groups $E$  for which $\eta_E$ fails to be continuous,
further
$\widehat E$ fails to be complete, while its bidual $\doublewidehat{E}$ is discrete.
These examples,  in essence due to \textsc{Leptin} (\cite{lep} 1955), later
mentioned by 
\textsc{Noble} (\cite{nobthesis} Chapter 1, 1967, and \cite{nob}, Example 1.6, 1970), \textsc{Banaszczyk} (\cite{ban} 1991) 
also illustrate certain aspects of the general progress
that was contributed to duality by \textsc{Aussenhofer} (\cite{aus}, 1999).

We also point out 
that the environment of pro-Lie groups
permits a  different access to 
\textsc{Aussenhofer}'s insight that for
large classes of abelian topological groups $G$ including
$\apl$ groups  the evaluation morphism
$\eta_G$ \emph{is bijective and open}, in other words, that
$\eta_G^{-1}$ exists and is continuous. 
We explain in which
sense duality is necessary and sufficient for the continuity of
$\eta_G$ for pro-Lie groups. 

While  $\eta_G$ may be discontinuous even for $\apl$ groups, as \textsc{Leptin}'s
example had illustrated for 70 years, it is also known that 

\smallskip\noindent
(C) \emph{for each compact subspace $C$ of $G$, the restriction
$\eta_G|C\colon C\to \doublewidehat{G}$ is continuous,}

\smallskip\noindent
as is pointed out in 
\cite{adgb},
Proposition 13.4.1, in
the proof of
\cite{hofi}, Theorem 7.7 (iii) or in \cite{strop}, proof of Lemma 2.10. 
This points into the direction of what became known as \emph{k-groups}.
According to \textsc{Noble} \cite{nobthesis,nob}, a group homomorphism defined on a 
topological group $G$
is called \emph{k-continuous} if each restriction to a compact subset
of $G$ is continuous.
Accordingly, $\eta_G$ \emph{is k-continuous}.
\textsc{Noble} calls a topological
group $G$ a \emph{k-group} if each k-continuous
homomorphism from $G$ to a topological
group is  continuous.
In this terminology,  for every  k-group $G$, the evaluation morphism
$\eta_G$ is continuous. 
This justifies  
our review of the validity
of duality in the context of topological abelian k-groups 
(see \cite{nobthesis,nob,nobii}). Indeed we present in Section~1
a new category
theoretical aspect, namely, that \emph{the subcategory of k-groups
is coreflexive in the category of topological groups} and  what this
 means in concrete circumstances.  
 All locally compact groups are k-groups. The examples $E$ of pro-Lie groups
 we shall discuss in Section 2 fail to be  k-groups. 

 A brief summary of our results then reads as follows:
 
\smallskip\noindent    

\textbf{Theorem.} 
{\em 
 {\bf (A)} For each topological group $G$ there is a topological
  k-group $kG$ arising functorially from $G$ by refining the topology
  in such a fashion that the natural bijection $\kappa_G\colon kG\to G$
  preserves all compact subspaces of $G$ and that each morphism
  $f\colon H\to G$ from a k-group $H$ to $G$ factors via $f'\colon H\to kG$
  through $kG$ such that $f=\kappa_G\circ f'$.
  
 {\bf (B)} For each abelian pro-Lie k-group $G$
 the evaluation morphism $\eta_G\colon G\to \doublewidehat{G}$ is an
 isomorphism.} 

\smallskip\noindent
One might say that an abelian pro-Lie group $G$ 
\emph{satisfies \textsc{Pontryagin Duality}
if it is a k-group,}
and that \emph{no abelian pro-Lie group is ever far away from a k-group.}
Indeed we conclude our discourse with the open question whether
for any abelian pro-Lie group $G$ the double dual
$\doublewidehat G$ is automatically a k-group.

\subsection*{Conventions}
In what follows, all topological spaces are assumed to be {\em Hausdorff}
unless stated otherwise. A {\em morphism} between topological groups is a
continuous group homomorphism. A {\em Lie group} is a topological group
whose underlying space is a smooth manifold (locally diffeomorphic
to some $\RR^m$) 
such that the map $(x,y)\mapsto xy^{-1}$ is smooth.
We impose {\em no countability conditions}
on Lie groups. A {\em pro-Lie group}
is a topological group $G$ satisfying the following equivalent conditions.
\begin{enumerate}[\rm(i)]
 \item $G$ is isomorphic to a closed subgroup
of a product of Lie groups.
\item $G$ is complete and $G$ is (in the category of topological groups) a
projective limit of Lie groups \cite[Thm.~2.37]{hofii}.
\end{enumerate}

\subsection*{Acknowledgement}
First of all, we thank the referee for very helpful and constructive 
remarks and suggestions on an earlier version of the present article.
Our research project was carried out during a stay in Oberwolfach
as Oberwolfach Research Fellows in 2025.
We thank the Institute for its hospitality.
The 2nd author was supported by the DFG under Germany's Excellence Strategy 
EXC 2044/2--390685587, Mathematics Münster: Dynamics-Geometry-Structure.

\section{k-groups}

We recall that a \emph{k-space} (sometimes called a \emph{Kelley space})
is a Hausdorff space $X$ with the following
property: a map $f\colon X\to Y$ to any other topological space $Y$ is continuous if and
only if the restriction of $f$ to every compact subspace $C\subseteq X$ is continuous.
These spaces were introduced by \textsc{Hurewicz}; 
they play a major role in algebraic
topology because they have many favorable properties. Every locally compact space
and every first countable space is a k-space. We refer to  \cite{dug} XI.9 or \cite{eng} 3.3
for more results about these spaces. If $X$ is any Hausdorff space, with topology
$\cT$, then there is a finer topology $\cT_{\max}\supseteq\cT$ having the same compact sets as $\cT$, such that $(X,\cT_{\max})$
is a k-space. The topology $\cT_{\max}$ has an explicit description as follows.
A subset $A\subseteq X$
is $\cT_{\max}$-closed if and only if $A\cap C$ is closed, for every compact subset $C\subseteq X$, that is,
\[
\cT_{\max}=\{U\subseteq X : (X\setminus U)\cap C\text { is closed for every
compact }C\subseteq X\}.
\]
It follows that 
$\cT_{\max}$ is the unique largest topology containing $\cT$ that has the same compact sets
as $\cT$. The assignment $(X,\cT)\mapsto(X,\cT_{\max})$ is a functor which is right adjoint
to the inclusion functor of k-spaces into Hausdorff spaces. These categorical
aspects are discussed in \cite{steenrod} and in \cite{maclane}, VII.8.
The category of k-spaces also has some drawbacks. Notably, a product of k-spaces need not be a k-space  in the product topology, see 
Remark~\ref{kgroupremark} below. Also, $\cT_{\max}$ need not be a group topology if $\cT$ is a group topology.
Nevertheless, one may study the group objects
in the category of k-spaces. This is carried out in \textsc{LaMartin}'s work~\cite{lam}.

\textsc{Noble} \cite{nobthesis,nob} introduced a variation of this construction in the context
of topological groups in his 1967 PhD thesis and considered k-groups. However, he did not discuss the
categorical aspects of his construction. In what follows, we give a self-contained
introduction to k-groups from a categorical viewpoint.

The following general lemma is certainly well-known. 
\begin{Lem}\label{supremum}
Let $G$ be a group and let $\tau$ be a set of group topologies on $G$
(which need not be Hausdorff).
Then $\tau$ has a unique supremum $\cT=\sup\tau$ in the partially ordered set of all topologies
on $G$, and $\cT$ is a group topology.
\end{Lem}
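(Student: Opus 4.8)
The plan is to construct $\cT = \sup\tau$ explicitly as the topology generated by $\bigcup\tau$ as a subbasis, and then verify that this generated topology is in fact a group topology. First I would recall that on any set, an arbitrary collection of topologies has a supremum in the lattice of topologies: namely, take $\mathcal{S} = \bigcup_{\sigma\in\tau}\sigma$ as a subbasis, form finite intersections of members of $\mathcal{S}$ to get a basis $\mathcal{B}$, and then take arbitrary unions of members of $\mathcal{B}$. Call the resulting topology $\cT$. It is immediate that $\cT$ contains every $\sigma\in\tau$, and that any topology containing all $\sigma\in\tau$ must contain $\mathcal{S}$, hence $\mathcal{B}$, hence $\cT$; so $\cT$ is genuinely the least upper bound. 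This part is purely formal and needs only a sentence or two.

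The substantive step is to show $\cT$ is a group topology on $G$, i.e.\ that the map $m\colon G\times G\to G$, $(x,y)\mapsto xy^{-1}$ (or separately multiplication and inversion), is continuous when $G$ carries $\cT$ and $G\times G$ carries $\cT\times\cT$. The key idea here is that a subbasis suffices to check continuity: $m$ is continuous if and only if $m^{-1}(V)$ is open for every $V$ in a subbasis of $\cT$, and by construction such a $V$ lies in some single $\sigma\in\tau$. Fix $\sigma\in\tau$ and $V\in\sigma$, and a point $(x,y)$ with $xy^{-1}\in V$. Since $\sigma$ is a group topology, there are $\sigma$-open sets $U_1\ni x$, $U_2\ni y$ with $U_1U_2^{-1}\subseteq V$. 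But $U_1$ and $U_2$ are also $\cT$-open, since $\cT\supseteq\sigma$, so $U_1\times U_2$ is a $\cT\times\cT$-open neighbourhood of $(x,y)$ mapped into $V$ by $m$. Hence $m^{-1}(V)$ is $\cT\times\cT$-open, and as $V$ ranged over a subbasis, $m$ is continuous. (One should note the standard caveat that a subbasis of $\cT\times\cT$ is given by sets of the form $\pr_1^{-1}(W)$ and $\pr_2^{-1}(W)$ with $W$ subbasic in $\cT$, and that finite intersections of these suffice; the neighbourhood $U_1\times U_2$ above is a finite intersection of this form when $U_1,U_2$ are themselves taken basic, which they may be.)

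I would structure the write-up as: (i) one paragraph constructing $\cT$ and checking the least-upper-bound property; (ii) one paragraph checking the group axioms via the subbasis criterion, doing the $xy^{-1}$ map in one shot so that both multiplication and inversion are handled together. The uniqueness of the supremum is automatic in any partially ordered set, so no separate argument is needed beyond remarking it.

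The main obstacle — and it is a mild one — is the bookkeeping around bases versus subbases: to invoke the group-topology property of a single $\sigma$ one wants honest $\sigma$-open neighbourhoods of $x$ and $y$, and then one must observe these are $\cT$-open and that their product in $G\times G$ is $\cT\times\cT$-open; conversely, to verify continuity of $m$ it is enough to pull back a subbasis of $\cT$, and each subbasic set lives in a single $\sigma$. Keeping straight which collection (subbasis, basis, full topology) one is working with at each step, on $G$ and on $G\times G$, is the only place care is required; there is no serious mathematical difficulty, and the Hausdorff condition plays no role (as the lemma's parenthetical remark already flags).
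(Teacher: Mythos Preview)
Your proof is correct. The paper takes a different but equally short route: rather than generating $\cT$ from $\bigcup\tau$ as a subbasis and then checking continuity of $(x,y)\mapsto xy^{-1}$ on subbasic sets, it embeds $G$ diagonally into the product $H=\prod_{\cS\in\tau}G_{\cS}$ and pulls back the product topology along the diagonal $d\colon G\to H$. This makes the group-topology property automatic (the product of topological groups is a topological group, and $d$ is a homomorphism, so the initial topology it induces is a group topology), while the supremum property follows because continuity into a product is equivalent to continuity of each coordinate. Your approach trades this structural shortcut for an explicit subbasis computation; it is slightly more hands-on but avoids invoking products altogether, which is arguably more self-contained. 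Both arguments are standard and neither has a real advantage over the other.
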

\begin{proof}
    For each $\cS\in\tau$ let $G_{\cS}$ denote the topological group $G$ with topology
    $\cS$. We put $H=\prod_{\cS\in\tau}G_{\cS}$ and we consider the diagonal
    map $d\colon G\to H$, and  $\cT=\{d^{-1}(U) : U\text{ open in }H\}$.
    Then $\cT$ is a group topology and an upper bound for $\tau$, because
    each map $p_{\cS}\circ d\colon G\to G_{\cS}$ is continuous and
    bijective.
    If some topology $\cT'$ on $G$ is an upper bound for $\tau$, then
    each map $p_{\cS}\circ d$ is $\cT'$-continuous and hence $d$ is $\cT'$-continuous.
    Thus $\cT'\supseteq\cT$. This shows that the group topology $\cT$ is a least upper bound for $\tau$
    in the set of all topologies on $G$.
\end{proof}
The following notion is due to \textsc{Noble}~\cite{nobthesis,nob}.
\begin{Def}\label{kgroupdef}
    We call a homomorphism $f\colon G\to H$ between topological groups $G,H$
    \emph{k-continuous} or a \emph{k-morphism}
    if the restriction of $f$ to any compact subset
    $C\subseteq G$ is a continuous map. 
    A k-continuous homomorphism is thus sequentially continuous.
    We call $G$ a \emph{k-group} if every k-continuous
    homomorphism $f\colon G\to H$
    is continuous, for every topological group $H$.
  \end{Def}
  
    For example,
every locally compact group and every first countable topological group
    is a k-group. More generally, a topological group whose underlying
    topology     is a k-space is a k-group. 

We denote the full subcategory of k-groups in the category
of topological groups $\tg$ by $\ktg$.

\begin{Construction}\label{kgroup}
Let $G$ be a topological group, with group topology $\cT$. By Lemma
\ref{supremum}, the set $\tau$ of all group topologies on $G$ which refine $\cT$ and which have the
same compact sets as $\cT$ has a unique supremum $k\cT=\sup\tau$, and 
$k\cT$ is a group topology. Since $\cT_{\max}$ is an upper bound for $\tau$,
the group topology $k\cT$ has the same compact sets as $\cT$,
that is, $k\cT=\max\tau$.
We denote the resulting topological group by $kG$ for short.
\end{Construction}
We will see below in \ref{kgroupremark}
that $k\cT$ may be strictly smaller than $\cT_{\max}$.
Note that our definition of the group topology $k\cT$ does not give an explicit
description of this topology, in contrast to the topology $\cT_{\max}$ introduced
above.

\begin{Lem}
    The group $kG$ is a k-group, and $kG$ has the same compact subsets as $G$.
\end{Lem}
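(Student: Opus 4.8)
The plan is to prove the two assertions separately, both of which follow from the construction of $k\cT=\sup\tau$ and the characterization of k-groups.

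First I would show that $kG$ has the same compact subsets as $G$. By Construction \ref{kgroup}, the topology $k\cT$ is finer than $\cT$, so every $k\cT$-compact set is $\cT$-compact. Conversely, $\cT_{\max}$ is an upper bound for $\tau$, hence $k\cT\subseteq\cT_{\max}$; but $\cT_{\max}$ has the same compact sets as $\cT$ (it is the k-ification of the underlying space), so any $\cT$-compact $C\subseteq G$ is also $\cT_{\max}$-compact, and thus $k\cT$-compact since $k\cT$ is coarser than $\cT_{\max}$. Actually the cleaner route, already indicated in the Construction, is simply that $k\cT\in\tau$ (being $\max\tau$), so by definition $k\cT$ has the same compact sets as $\cT$; I would phrase it that way.

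Next I would show $kG$ is a k-group. Let $f\colon kG\to H$ be a k-continuous homomorphism to a topological group $H$. I want to produce a group topology on the underlying group of $G$ that lies in $\tau$ and makes $f$ continuous; then $f$ is automatically continuous for the finer (or equal) topology $k\cT=\sup\tau$. Concretely, let $\cS$ be the initial group topology on $G$ induced by the two maps $\kappa\colon G\to (G,\cT)$ (the identity) and $f\colon G\to H$ — equivalently, the supremum $\cT\vee f^{-1}(\text{topology of }H)$, which is a group topology by Lemma \ref{supremum}. Since $\cS\supseteq\cT$, every $\cS$-compact set is $\cT$-compact. For the reverse inclusion: if $C\subseteq G$ is $\cT$-compact, then $C$ is $k\cT$-compact (same compact sets), and the restriction $f|C$ is continuous by k-continuity of $f$, so on $C$ the topology $\cS$ agrees with $\cT$, whence $C$ is $\cS$-compact. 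Therefore $\cS\in\tau$, so $\cS\subseteq k\cT$, which means the identity $(G,k\cT)\to(G,\cS)$ is continuous; composing with the ($\cS$-continuous) map $f$ shows $f\colon kG=(G,k\cT)\to H$ is continuous. Hence $kG$ is a k-group.

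The main obstacle — and the only point requiring care — is the argument that $\cS$ lies in $\tau$, i.e. that refining $\cT$ by pulling back the topology of $H$ along a k-continuous $f$ does not create new compact sets or destroy old ones. Destroying compact sets cannot happen since $\cS\supseteq\cT$; the subtle direction is that a $\cT$-compact set $C$ stays compact in $\cS$, and this is exactly where k-continuity of $f$ is used: on $C$ the new subbasic open sets $f^{-1}(V)$ already belong to the subspace topology inherited from $\cT$, so the subspace topologies $\cS|C$ and $\cT|C$ coincide. One should also note that $\cS$ is Hausdorff (being finer than the Hausdorff topology $\cT$), so it is a legitimate object of $\tau$ under the standing Hausdorff convention. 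Everything else is formal bookkeeping with Lemma \ref{supremum} and the definition of $k\cT$ as $\max\tau$.
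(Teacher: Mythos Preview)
Your proof is correct and follows essentially the same approach as the paper: form the supremum of $\cT$ with the pullback topology $f^{-1}(\text{topology of }H)$, verify that this group topology lies in $\tau$, and conclude that $f$ is $k\cT$-continuous. The only cosmetic difference is in how the membership in $\tau$ is checked---the paper bounds the refined topology above by $\cT_{\max}$ (arguing that each $f^{-1}(B)\cap C$ is closed, so the pullback topology sits inside $\cT_{\max}$), whereas you verify directly that the subspace topologies coincide on each compact $C$; both arguments are equivalent and equally short.
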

\begin{proof}
    Let $\cT$ denote the group topology of $G$.
    By Construction~\ref{kgroup}, the topological group $kG$ has the same compact subsets as $G$.
    Let $f\colon kG\to H$ be a k-continuous group homomorphism.
    Since $kG$ and $G$ have the same compact subsets, $f$ is also k-continuous
    as a map from $G$ to $H$. If $B\subseteq H$ is a closed subset, then 
    $f^{-1}(B)\cap C$ is closed for every compact subset $C\subseteq G$, because $f$ is k-continuous.
    The (possibly non-Hausdorff) group topology $\cS=\{f^{-1}(U) : U\subseteq H\text{ open}\}$ on $G$ is therefore contained in the topology $\cT_{\max}$. Hence $\cT'=\sup\{\cT,\cS\}\subseteq\cT_{\max}$
    is a Hausdorff group topology having the same compact sets as $\cT$,
    and  $f$ is continuous with respect to $\cT'$. But then $f$ is also continuous
    with respect to $k\cT\supseteq\cT'$.
\end{proof}
The identity map on the underlying group $G$ is a morphism $\kappa_G\colon kG\to G$, which plays
a special role.
\begin{Prop}\label{adjoint}
 Let $G$ be a topological group. Then $\kappa_G\colon kG\to G$ has the following universal property.
 If $H$ is a k-group, and if $f\colon H\to G$ is a morphism, then $f$ factors uniquely as $f=\kappa_G\circ f'$,
 \[
  \begin{tikzcd}
   H \arrow{dr}{f}\arrow{d}[swap]{f'} \\
   kG\arrow{r}{\kappa_G} & G .
  \end{tikzcd}
 \]
 In particular, $G$ is a k-group if and only if $G=kG$.
\end{Prop}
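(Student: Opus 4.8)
The plan is to verify the factorization first and then read off the "in particular" clause as an immediate consequence. Since $\kappa_G$ is the identity on underlying groups, the only candidate for $f'$ is the set-theoretic map underlying $f$ itself, viewed now as a map $H\to kG$; uniqueness is then automatic, and the entire content of the statement is that this map is \emph{continuous} as a map into $kG$. So the first step is to fix a k-group $H$ and a morphism $f\colon H\to G$, and to set $f'$ to be $f$ as a map of abstract groups. The equality $f=\kappa_G\circ f'$ then holds on the nose.

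Next I would show $f'\colon H\to kG$ is continuous. The key observation is that $\kappa_G$ preserves and reflects compact subsets (by Construction~\ref{kgroup} and the preceding lemma, $kG$ and $G$ have the same compact subsets), so for any compact $C\subseteq H$ the image $f(C)$ is a compact subset of $G$ that is also compact in $kG$, and on $C$ the map $f'$ factors through the inclusion of this compact set; since $f\colon H\to G$ is continuous and the subspace topology that $f(C)$ inherits from $G$ agrees with the one it inherits from $kG$ (both topologies induce the same compact sets, and a compact Hausdorff topology is determined by its points), the restriction $f'|_C\colon C\to kG$ is continuous. Hence $f'$ is k-continuous as a homomorphism $H\to kG$. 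Because $H$ is a k-group, every k-continuous homomorphism out of $H$ is continuous, so $f'$ is continuous, which completes the factorization.

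For the last sentence: if $G=kG$ (as topological groups) then $G$ is a k-group by the lemma asserting that $kG$ is always a k-group. Conversely, if $G$ is a k-group, then the identity map $\id_G\colon G\to G$ is a morphism from a k-group to $G$, so by the universal property it factors as $\id_G=\kappa_G\circ(\id_G)'$ with $(\id_G)'\colon G\to kG$ continuous; since $(\id_G)'$ is the identity on underlying groups and $\kappa_G$ is too, $(\id_G)'$ is a continuous bijective group homomorphism whose inverse is $\kappa_G$, also continuous, so $\kappa_G$ is an isomorphism of topological groups, i.e.\ $G=kG$.

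The only step requiring genuine care is the continuity of $f'|_C$, i.e.\ the claim that the compact subsets of $kG$ carry the same subspace topology as in $G$; this is where one uses that a continuous bijection from a compact space to a Hausdorff space is a homeomorphism, applied to $\kappa_G$ restricted to a compact set, together with the fact — already recorded — that $k\cT$ and $\cT$ have the same compact sets. Everything else is formal: uniqueness of $f'$ is forced by $\kappa_G$ being bijective, and the "in particular" clause is a one-line application of the universal property to $\id_G$. I expect no serious obstacle; the proof is essentially a coreflection argument, and the main thing to get right is not to conflate "$kG$ and $G$ have the same compact \emph{sets}" with "the same \emph{subspace topologies} on those sets" — the latter needs the compact-Hausdorff rigidity argument.
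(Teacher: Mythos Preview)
Your proposal is correct and follows essentially the same route as the paper: define $f'$ as the underlying map of $f$, observe that $f'$ is k-continuous into $kG$ because $kG$ and $G$ have the same compact subspaces, and invoke that $H$ is a k-group. The paper's proof is terser---it simply asserts ``since $kG$ and $G$ have the same compact subspaces, $f$ is k-continuous as a map from $H$ to $kG$''---whereas you correctly isolate and justify the one nontrivial point, namely that ``same compact sets'' upgrades to ``same subspace topologies on compact sets'' via the compact--Hausdorff rigidity argument applied to $\kappa_G$; and you also spell out the ``in particular'' clause, which the paper leaves implicit.
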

\begin{proof}
 Since $kG$ and $G$ have the same compact subspaces, $f$ is k-continuous as a map from $H$ to $kG$.
 Hence $f=f'\colon H\to kG$ is a morphism. Since $\kappa_G$ is the identity map on the underlying group
 $G$, the map $f'$ is uniquely determined by $f$.
\end{proof}

Let us now consider the  category $\tg$ of topological groups
(and continuous group morphisms), and the full subcategory $\ktg$
of k-groups and continuous group morphisms, with the  
inclusion functor $\iota\colon \ktg\to\tg$.
We shall now use standard category theoretical notation as is presented
e.g.~in
\cite{hofi}  (Theorem A3.28 ff., in \cite{hofi}, p.~814ff.). 
The machinery of adjoint functors (see e.g. \cite{hofi}, Definition A3.29ff.,
see also \cite{hofii}, A1.40 and A1.41) shows the following from Proposition~\ref{adjoint}.
\begin{Thm}
     The inclusion functor $\iota\colon \ktg\to\tg$
has a right adjoint $k\colon\tg\to\ktg$ that maps a group $G$ to the k-group
$kG$.
\end{Thm}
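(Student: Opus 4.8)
The plan is to invoke the standard adjoint functor theorem machinery, treating Proposition~\ref{adjoint} as the source of the universal property and translating it directly into an adjunction. The key observation is that the universal property of $\kappa_G\colon kG\to G$ stated in Proposition~\ref{adjoint} is precisely the defining property of a co-universal arrow from the inclusion functor $\iota\colon\ktg\to\tg$ to the object $G$ of $\tg$. More precisely, for every k-group $H$ and every morphism $f\colon \iota H\to G$ in $\tg$, there is a unique morphism $f'\colon H\to kG$ in $\ktg$ with $f=\kappa_G\circ\iota f'$; since $kG$ is a k-group (by the preceding lemma) this is exactly a terminal object in the comma category $(\iota\downarrow G)$, i.e.\ $\kappa_G$ is the counit component at $G$.

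First I would recall the relevant criterion: a functor $\iota\colon\mathcal{A}\to\mathcal{B}$ has a right adjoint if and only if for every object $B$ of $\mathcal{B}$ there exists an object $kB$ of $\mathcal{A}$ together with a morphism $\kappa_B\colon \iota(kB)\to B$ that is co-universal, meaning every $f\colon \iota A\to B$ factors uniquely through $\kappa_B$ via a morphism $A\to kB$ in $\mathcal{A}$ (see \cite{hofi}, Theorem A3.28ff., or \cite{maclane}). Then I would simply verify the hypothesis of this criterion: Proposition~\ref{adjoint} supplies exactly such a $\kappa_G\colon kG\to G$ for each topological group $G$, and the lemma immediately preceding that proposition guarantees $kG\in\ktg$. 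Hence the criterion applies and yields a right adjoint $k\colon\tg\to\ktg$; on objects it sends $G$ to $kG$, and on morphisms $g\colon G\to G'$ it sends $g$ to the unique fill-in $(g\circ\kappa_G)'\colon kG\to kG'$ obtained by applying the universal property of $\kappa_{G'}$ to the composite $kG\xrightarrow{\kappa_G}G\xrightarrow{g}G'$, which is automatically functorial by uniqueness of the fill-in.

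I do not anticipate a genuine obstacle here; the statement is essentially a formal consequence of Proposition~\ref{adjoint}. The one point requiring a line of care is the well-definedness and functoriality of $k$ on morphisms, but this is the routine uniqueness argument just described: given $g\colon G\to G'$ and $h\colon G'\to G''$, both $k(hg)$ and $(kh)(kg)$ are fill-ins for $h\circ g\circ\kappa_G$ against $\kappa_{G''}$, so they coincide; similarly $k(\id_G)=\id_{kG}$. The counit of the adjunction is then $\kappa$ itself, and the unit at a k-group $H$ is the identity, reflecting the last sentence of Proposition~\ref{adjoint} that $G$ is a k-group iff $G=kG$; in other words $k\iota\cong\id_{\ktg}$, so the adjunction exhibits $\ktg$ as a coreflective subcategory of $\tg$, as promised in the introduction.
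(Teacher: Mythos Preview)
Your proposal is correct and takes essentially the same approach as the paper: the paper's own proof is the single sentence that ``the machinery of adjoint functors \dots\ shows the following from Proposition~\ref{adjoint},'' i.e.\ exactly the co-universal-arrow criterion you invoke. If anything, you have spelled out more than the paper does (the functoriality of $k$ on morphisms and the identification of unit and counit), which is perfectly fine.
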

It may be useful to repeat explicitly what
this adjunction of functors means.
For each topological group $G$ there is a k-group $kG$ and a natural
morphism $\kappa_G\colon \iota kG \to G$ of topological groups such that
for every k-group $H$ and each morphism
$f\colon \iota H\to G$ of topological groups there is a unique
k-morphism $f'\colon  H\to kG$
such that $f=\kappa_G\circ \iota f'$ and that
$$f\mapsto f'\colon \tg(\iota H,G)\to \ktg(H,kG)$$
is a natural bijection.

Repeated again in other words: 
for any  topological group $G$ we obtain functorially
a k-group $kG$, 
 and a morphism $\kappa_G\colon \iota kG\to G$ of
topological groups (in fact turning out to be \emph{bijective}).
Then the universal property
explained above is summarized in  the following diagram:
\begin{center}
    \begin{tabular}{ccc}
        $\tg$  && $\ktg$ \\ \hline
        \begin{tikzcd}
    G && \arrow{ll}[swap]{\kappa_G} \iota kG \\
     && \iota H \arrow{u}[swap]{\iota f'}\arrow{llu}{f}
     \end{tikzcd}
     &&
     \begin{tikzcd}
         kG \\ \arrow{u}{\exists!}[swap]{f'} H
     \end{tikzcd}
    \end{tabular}
\end{center}
The diagram might be expressed briefly by saying
(as in Proposition~\ref{adjoint}) that \emph{any morphism
of topological groups from any k-group $H$ into the topological group
$G$  factors through $\kappa_G$.}

\begin{Rem}\label{coreflexive}
The subcategory $\iota\colon \ktg\to\tg$ is what is called a \emph{coreflective} subcategory,
with coreflector $k\colon \tg\to\ktg$. Such a subcategory has several important properties which we now recall.
Suppose that $D\colon J\to\ktg$ is any small diagram. 
\begin{enumerate}[\rm(1),nosep]
    \item If $\iota D\colon J\to \tg$ has a colimit
$K$, then $K\cong \iota K'$ for some $K'$ in $\ktg$---every colimit of k-groups
that exists in $\tg$
already exists in $\ktg$. (For the proof one puts $K'=kK$.
From the colimit property of $K$, there is a universal
morphism $K\to\iota K'$ and from the adjunction there is a universal
morphism $\iota K'=\iota kK\to K$.)
\item
In particular, if $N\unlhd G$ is a closed normal subgroup in a k-group $G$,
then $G/N$ is a k-group with respect to the quotient topology.
\item If $\iota D\colon J\to\tg$ has a limit $L$ in $\tg$, then 
  $kL$ is the limit of $D$ in $\ktg$. 
  (This holds because the right adjoint $k$ 
  preserves limits.)
\item
If $G_i$, for $i\in I$, is a family of k-groups, then 
$k(\prod_{i\in I}G_i)$ is the categorical product of the k-groups $G_i$ in 
the category $\ktg$. (This will be improved in Theorem~\ref{NobleTheorem} below.)
\item By (1) and (3), the category $\ktg$ is complete and cocomplete, since
$\tg$ is complete and cocomplete.
\end{enumerate}

Analogous remarks apply to the category $\tab$ of topological abelian groups
and the full subcategory of abelian k-groups $\kab$.
The inclusion $\kab\to\tab$ also has the right adjoint $k$. It follows in this case
that a finite product of abelian k-groups (which is also coproduct in
$\tab$) is a k-group.
\end{Rem}
In view of Remark \ref{coreflexive}(4),
the following result by \textsc{Noble}~\cite{nobthesis,nobii}
is rather surprising. We present a simplified
version of the proof \footnote{The author L.K.~did not understand
the (different) proof presented in \cite{nobii}.}
 given in \cite{nobthesis}.

 \begin{Thm}\label{NobleTheorem}
   Products of k-groups are again k-groups
   with respect to the product topology.
\end{Thm}
\begin{proof}
  Let $(G_i)_{i\in I}$ be a family of k-groups,
  with product $G=\prod_{i\in I}G_i$,
 with projections $p_i\colon G\to G_i$ and with product topology $\cT$. 
 We need to show that $\cT=k\cT$. 
 For $g\in G$ we put 
 $\supp(g)=\{i\in I : p_i(g)\neq e_i\}$,
 and $G'=\{g\in G : \supp(g)\text{ is countable}\}$.
 For $J\subseteq I$ we put $G_J'=\{g\in G': \supp(g)\cap J=\emptyset\}$.
 
 \smallskip\noindent
 \emph{Claim 1. For every $k\cT$-identity neighborhood $V\subseteq G$, there 
 is a finite set $J\subseteq I$ such that 
 $G'_J\subseteq V$.}
 
 \smallskip\noindent
 Assume that the claim is false. We pick an element $g_0\in G'\setminus V$,
 and enumerate the countable set $\supp(g_0)$ by a surjective map
 $\NN\to\supp(g_0)$. 
 Inductively, we choose $g_{n+1}\in G'\setminus V$ in such a way
 that $\supp(g_{n+1})$ contains none of first $n$ elements in each of the sets
 in $\supp(g_0),\supp(g_1),\ldots,\supp(g_n)$,
 and we fix a surjective map $\NN\to\supp(g_{n+1})$.
 Each $g_n$ has its support contained
 in the countable set $\bigcup_{m\in\NN}\supp(g_m)=K\subseteq I$. Moreover,
 for each $k\in K$ there exists some $m\in\NN$
 such that $p_k(g_n)=e_k$ holds for
 all $n\geq m$. Therefore the sequence $(g_n)_{n\in\NN}$ converges
 in the product topology $\cT$ to the identity element $e$.
 The set $C=\{g_n\mid n\in\NN\}\cup\{e\}\subseteq G$
 is thus compact (in $\cT$, and hence also in $k\cT$) with $V\cap C=\{e\}$. 
 This is a contradiction to the fact
 that $V$ is a $\cT_{\max}$-neighborhood of $e$,
 because $e$ is not isolated in $C$.
 Hence there is some finite set $J\subseteq I$ of cardinality $m$,
 with $G'_J\subseteq V$, and Claim~1 is true.
 
 \smallskip\noindent
 \emph{Claim 2. Let $J\subseteq I$ be a subset.
   The $k\cT$-closure of $G'_J$ contains
   $\prod_{j\in J}\{e_j\}\times\prod_{i\in I\setminus J}G_i$.}
 
 \smallskip\noindent
 Let
$g=(g_j)_{j\in J}\in \prod_{j\in J}\{e_j\}\times\prod_{i\in I\setminus J}G_i$
an consider the $\cT$-compact set
\[D=\prod_{j\in J}\{e_j\}\times\prod_{i\in I\setminus J}\{e_i,g_i\},\]
which contains $g$. Then $G'_J\cap D$ is dense in $D$.
Since $D$ is also $k\cT$-compact, $g$ is in the $k\cT$-closure of $G'_J$,
and Claim~2 is true.

\smallskip
Now we prove the theorem.
Let $U\subseteq G$ be any $k\cT$-identity neighborhood,
and let $V\subseteq U$ be a $k\cT$-identity neighborhood
with $VVV\subseteq U$.
By Claim 1, there is a finite subset $J\subseteq I$ with $G'_J\subseteq V$, of
cardinality $m$. Let $W\subseteq G$ be a $k\cT$-identity neighborhood
with $W^{\cdot m}=W\cdots W{\ (m\ \text{times})}\subseteq V$.
For each $j\in J$ we
we have the inclusion morphism $\iota_j\colon G_j\to G$.
We choose identity neighborhoods
 $W_j\subseteq G_j$ such that $\iota_j(W_j)\subseteq W$.
 Then $Z=\prod_{j\in J}W_j\times\prod_{i\in I\setminus J}G_i$
 is a $\cT$-identity neighborhood which is contained in the $k\cT$-closure 
 of $W^{\cdot m}G_J'$, and $W^{\cdot m}G_J'\subseteq VV$. The
 $k\cT$-closure of $VV$ is contained in $VVV$, whence
 \[
  Z\subseteq VVV\subseteq U,
 \]
 which shows that $U$ is a $\cT$-identity neighborhood.
Thus $\cT\supseteq k\cT\supseteq \cT$.
\end{proof}

\begin{Caveat}
    We noticed in Remark~\ref{coreflexive} that the category of 
    $k$-groups forms a full and coreflexive 
    subcategory
    $\iota\colon \ktg\to\tg$ of the category of topological groups.
    The coreflector is the `k-fication' functor $G\mapsto kG$.
    The category $\ktg$ contains all metrizable groups and 
    all locally compact groups (in fact, it contains
    all \v Cech-complete groups by \cite{eng} 3.9.5).
    Being coreflexive, the category $\ktg$ is complete and cocomplete.

    The inclusion functor $\iota$ preserves colimits,
    since it is left adjoint to $k$.
    It does \emph{not} preserve limits, as we shall see in the next section.
    Therefore, it is rather surprising that $\iota$ preserves products
    by Theorem~\ref{NobleTheorem},  which are after all special limits.
\end{Caveat}

However, the following is true
\footnote{The proofs presented in \cite{nobthesis} Cor.~5.3
or in \cite{nob} Cor.~1.8 appear to be incomplete.
Both proofs assume tacitly that every group topology on a subgroup
$H\subseteq G$ extends
to a group topology on $G$, which may well fail
if $H$ is not central in~$G$.}.

\begin{Thm}\label{opensubgroups}
Let $H\subseteq G$ be an open subgroup of the Hausdorff group $G$.
Then $G$ is a k-group if $H$ is a k-group. Conversely, if $H$ is
a central open subgroup and if $G$ is a k-group, then $H$ is a k-group.
\end{Thm}

\begin{proof}
  Let $\cT$ denote the topology on $G$.
 If the open subgroup $H$ is a k-group, then $H\to kG$
 is continuous. Since $H$ is open in $G$, this implies that $G\to kG$
 is continuous, whence $\cT=k\cT$.

Conversely, let $G$ be a k-group
 and let $H\subseteq G$ be a central open subgroup, with subspace
 topology $\cS$.
The topology of $kH$, which refines the subspace topology of $H$,
extends in a unique way to group topology  $\cT'$ on $G$
refining the topology
$\cT$ such that $kH$ is an open subgroup of $G$ in $\cT'$.
Here we  use that $H$ is central in $G$.
 
 If $C\subseteq G$ is a $\cT$-compact subset, then $C\cap gH$ is
 $\cT'$-compact for every coset $gH$. Moreover, $C$ intersects
 only finitely many such cosets nontrivially. Thus $C$ is also
 $\cT'$-compact. This shows that $\cT'\supseteq\cT$ is a group
 topology having the same compact sets as $\cT$, whence
 $\cT'=\cT$ and $H=kH$. 
\end{proof}

\begin{Rem}
    One might also consider the category $\mathrm{k-}\tg$
    whose objects are topological groups, and whose morphisms
    are $k$-continuous homomorphisms. In this category $\mathrm{k-}\tg$,
    whose hom-sets are possibly larger than those of $\tg$,
    the natural transformation $\kappa_G\colon kG\to G$ becomes invertible
    and gives an equivalence of categories
    \[\mathrm{k-}\tg\simeq\ktg.\]
\end{Rem}

\section{The Leptin-Noble-Banaszczyk Example} 

Before we illustrate how limits fail to be preserved
by $\iota\colon \kab\to\tab$
we have to inspect in detail a classical example described by Leptin
in 1955 \cite{lep} (see also \cite{nobthesis} from 1967, \cite{nob} from 1970,
and \cite{ban} from 1991).
Because of the significance of the example, we provide
some explicit reference at this point.

\begin{Def}\label{1.1}  Let $I$ denote the set of all ordinals
  called $\alpha$, $\beta$, $\gamma$ etc. less than the first
  uncountable ordinal $\omega_1$. 
  We fix a family of nontrivial discrete abelian groups $(Z_\alpha,+)$, for
 $\alpha\in I$. For example, $Z_\alpha$ may be a fixed finite or
 infinite cyclic group. Let $\widehat{Z_\alpha}=\Hom(Z_\alpha,\TT)$
 denote the dual of $Z_\alpha$, a compact abelian group.
 A key feature of $I$ is that for every countable subset
 $\{\alpha_n:n\in\NN\}\subseteq I$ there is some $\gamma\in I$
 such that $\alpha_n<\gamma$ holds for all $n$.
 This is true since the least  upper bound of
  a countable set of countable ordinals is again a countable ordinal.
  We put
  $Z=\bigoplus_{\alpha\in I} Z_\alpha$.
For the elements of $Z$ write $z=\sum_{\alpha\in I}z_\alpha$.
\end{Def}
Let $H_\alpha$ denote the subgroup
of $Z$ of all elements $z=\sum_{\beta\in I}z_\beta$ such that
$z_\beta=0$ for $\beta\leq\alpha$.
Then $\{H_\alpha: \alpha\in I\}$ is a basis of identity neighborhoods
of a group topology making $Z$ into a topological abelian group
$E$, see \cite{bour}, Chap. III, \S1.2 Proposition~1.
We note that $E$ is not discrete because no $H_\alpha$ is trivial.
If we put $K_\alpha=\bigoplus_{\beta\leq\alpha}Z_\beta$
with the discrete topology, 
we have an isomorphism of topological groups
\[ (\forall \alpha)\quad  E\cong K_\alpha \oplus H_\alpha,\]
where $K_\alpha\cong E/H_\alpha$ carries the discrete topology.

Also observe that the projective limit of the projective system
of discrete groups
$\{E/H_\beta \to E/H_\alpha, \alpha\le \beta\}$
agrees with $E$  up to natural isomorphism.
This fact shows that $E$ is indeed a prodiscrete and
thus, in particular, a pro-Lie group.
 In detail, this follows from Lemma \ref{1.5} below.

Recall that an F$_\sigma$-set in a topological space is a countable union
of closed sets.

\begin{Lem}\label{1.2}
In $E$, every F$_\sigma$-set is closed.
\end{Lem}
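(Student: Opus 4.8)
The plan is to work directly with unions: given a countable family $\{A_n : n\in\NN\}$ of closed subsets of $E$, put $A=\bigcup_{n\in\NN}A_n$ and show that $E\setminus A$ is open. (Equivalently, this amounts to showing that every $G_\delta$-subset of $E$ is open.)

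First I would record two elementary features of the topology of $E$. The subgroups $H_\alpha$, $\alpha\in I$, form a neighborhood basis at $0$, and they are nested: if $\alpha\le\beta$ then $H_\beta\subseteq H_\alpha$, since the defining condition ``$g_\gamma=0$ for all $\gamma<\beta$'' is stronger than ``$g_\gamma=0$ for all $\gamma<\alpha$''. The crucial consequence is that for any countable family of indices $\alpha_0,\alpha_1,\dots\in I$ there is a \emph{single} index $\alpha\in I$ with $H_\alpha\subseteq H_{\alpha_n}$ for every $n$: the ordinal $\alpha:=\sup_{n}\alpha_n$ is still $<\omega_1$ because $\omega_1$ is regular, i.e.\ a countable supremum of countable ordinals is again countable; hence $\alpha\in I$ and $\alpha\ge\alpha_n$ for all $n$, so $H_\alpha\subseteq H_{\alpha_n}$ for all $n$.

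Now fix $x\in E\setminus A$. For each $n$, since $x\notin A_n$ and $A_n$ is closed, the open set $E\setminus A_n$ contains a basic neighborhood of $x$, necessarily of the form $x+H_{\alpha_n}$ for some $\alpha_n\in I$; thus $(x+H_{\alpha_n})\cap A_n=\emptyset$. Choosing $\alpha\in I$ as above with $H_\alpha\subseteq H_{\alpha_n}$ for all $n$, we obtain $x+H_\alpha\subseteq x+H_{\alpha_n}$, so $x+H_\alpha$ is disjoint from every $A_n$, hence from $A$. Therefore $x+H_\alpha\subseteq E\setminus A$, which shows that $E\setminus A$ is a neighborhood of each of its points and so is open. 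Consequently $A$ is closed.

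The single nontrivial ingredient — and the only place an obstacle could arise — is the step producing one index $\alpha\in I$ dominating countably many given indices $\alpha_n$; this is exactly where the choice of $I$ as the set of all countable ordinals enters, via the uncountable cofinality (regularity) of $\omega_1$. Everything else is the routine observation that $E$ carries a neighborhood basis at $0$ consisting of subgroups that is linearly ordered by reverse inclusion along the well-ordered set $I$.
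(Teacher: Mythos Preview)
Your proof is correct and follows essentially the same argument as the paper's own proof: pick a point outside the union, separate it from each closed piece by a basic neighborhood $x+H_{\alpha_n}$, and use the uncountable cofinality of $\omega_1$ to find a single $\alpha\in I$ dominating all the $\alpha_n$. The only difference is cosmetic---you spell out the regularity of $\omega_1$ and the nesting of the $H_\alpha$ more explicitly than the paper does.
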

\begin{proof}
Let $C_n$, for $n\in \NN$, be a sequence of closed subsets of $E$ and set
$C=\bigcup_{n\in \NN}C_n$. Let $z\in E\setminus C$.
Then for each $n\in \NN$ we find
an $\alpha_n\in I$ such that
$(z+ H_{\alpha_n})\cap C_n=\emptyset$. Then we find
a $\gamma\in I$ such that $\alpha_n<\gamma$ for all $n\in\NN$. But then
$(z+H_\gamma)\cap C=\emptyset$, and therefore $C$ is closed.
\end{proof}
Spaces with this property are sometimes called 
\emph{pseudo-discrete spaces}
or \emph{P-spaces}.
\begin{Cor}\label{compactsarefinite}
Every compact subset of $E$ is finite.
In particular, for any topological space $X$ the compact-open topology
on the space $C(E,X)$ of continuous maps from $E$ to $X$ is the topology of pointwise convergence induced by $X^E$.
\end{Cor}
\begin{proof}
     By Lemma \ref{1.2} every countable  subset of $E$ is closed and discrete, because
it cannot have any accumulation point.
\end{proof}
\begin{Cor}\label{EIsNotAkGroup}
We have 
$kE=\bigoplus_{\alpha\in I}Z_\alpha$ with the discrete topology.
In particular, $E$ is not a k-group.
\end{Cor}
\begin{proof}
  Every compact subset of $E$ is finite, and therefore $E$ has
  the same compact subsets
 as the discrete group $Z=\bigoplus_{\alpha\in I}Z_\alpha$.
\end{proof}

\begin{Def}\label{added} We 
introduce  the group $A=\prod_{\alpha\in I}K_\alpha$
with the product topology, as an uncountable product of discrete groups
and define
\[\textstyle \phi\colon E\to A, \quad
\phi\big(\sum_{\alpha\in I}z_\alpha\big)=
(g_\alpha)_{\alpha\in I}\text{ where }g_\alpha=
\textstyle\sum_{\beta\leq\alpha}z_\beta.
\]
\end{Def} 

Recall that each factor $K_\alpha$ carries the discrete topology.
Clearly, $\phi$ is injective.

\begin{Lem}
The subset $\phi(E)$ of $A=\prod_{\alpha\in I} K_\alpha$
consists of all
$g=(g_\alpha)_{\alpha\in I}$, where
$g_\alpha =\sum_{\beta\leq\alpha}x_{\alpha\beta}$ such that
$$(\forall\beta\leq\alpha,\alpha')\quad x_{\alpha\beta}=x_{\alpha'\beta}.$$
Accordingly,  $\phi(E)$ is closed in $A$.
\end{Lem}
\begin{proof}
The elements $g$ in $\phi(E)$ satisfy this condition. For the converse,
we have to show that for such a $g$ we have $x_{\alpha\beta}=0$  for almost
all $\beta$. Indeed otherwise we would have infinitely many different $\beta_n$ with
$x_{a_n\beta_n}\ne0$ for
$n\in\NN$ and we could choose a $\gamma>\beta_n$ for all $n$ and
conclude that the sequence
$(x_{\gamma\beta})_{\beta<\gamma}$ has infinitely many nonzero entries.
In particular $g_\gamma$ would not belong to $K_\gamma$, which is a absurd.
\end{proof}

\begin{Lem}\label{1.5} The morphism
$\phi\colon E\to A$ is an isomorphism onto its image.
\end{Lem}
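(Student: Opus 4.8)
The plan is to exhibit $\phi$ as an injective, continuous, open homomorphism onto its image; since the preceding lemma already shows that $\phi(E)$ is closed in $A$, this will give that $\phi$ is a topological isomorphism of $E$ onto a closed subgroup of $A$. That $\phi$ is a group homomorphism is immediate from the coordinatewise formula $\phi(z+z')_\alpha=\sum_{\beta<\alpha}(z_\beta+z'_\beta)$, and injectivity has already been noted (it also follows from $\bigcap_{\alpha\in I}H_\alpha=\{0\}$). Thus only continuity and openness remain, and, $\phi$ and the topologies all being compatible with translation, it suffices to verify openness at the identity, i.e.\ that each basic identity neighbourhood $H_\alpha$ of $E$ is mapped to a relatively open subset of $\phi(E)$.

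For continuity, I would identify the composite $\pr_\alpha\circ\phi\colon E\to K_\alpha$ with the canonical quotient morphism attached to the splitting $E=K_\alpha\oplus H_\alpha$ (recall $E=K_\alpha\oplus H_\alpha$ for every $\alpha$): indeed $\pr_\alpha(\phi(z))=\sum_{\beta<\alpha}z_\beta$ is precisely the $K_\alpha$-component of $z$ in that decomposition, so $\pr_\alpha\circ\phi$ factors as $E\to E/H_\alpha\xrightarrow{\ \cong\ }K_\alpha$. Since $H_\alpha$ is an open subgroup, this composite is continuous into the discrete group $K_\alpha$; as this holds for all $\alpha$, the map $\phi$ into the product $A=\prod_{\alpha\in I}K_\alpha$ is continuous.

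For openness onto the image the claim to check is
\[\phi(H_\alpha)=\phi(E)\cap\pr_\alpha^{-1}(\{0\}),\]
which is open in $\phi(E)$ because $\{0\}$ is open in the discrete group $K_\alpha$. The inclusion ``$\subseteq$'' follows from the previous paragraph, since $z\in H_\alpha$ means exactly that the $K_\alpha$-component of $z$ vanishes. For ``$\supseteq$'', if $y=\phi(z)$ with $y_\alpha=\sum_{\beta<\alpha}z_\beta=0$, then, as $K_\alpha=\bigoplus_{\beta<\alpha}\ZZ_\beta$ with its summands sitting in distinct coordinates, $z_\beta=0$ for all $\beta<\alpha$, i.e.\ $z\in H_\alpha$. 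This completes the list of properties, and the result follows. None of this is genuinely difficult; the only point requiring a little care is the bookkeeping that matches the identity-neighbourhood basis $\{H_\alpha\}$ of $E$ with the subspace topology inherited from the product $A$, and once the composites $\pr_\alpha\circ\phi$ are recognized as the quotient maps $E\to E/H_\alpha$ this obstacle disappears.
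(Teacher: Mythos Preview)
Your proof is correct and follows essentially the same approach as the paper: continuity is checked via the projections $\pr_\alpha\circ\phi$, and openness is obtained from the identity $\phi(H_\alpha)=\phi(E)\cap\pr_\alpha^{-1}(\{0\})$. You supply more detail than the paper does---in particular, you explicitly identify $\pr_\alpha\circ\phi$ with the quotient map $E\to E/H_\alpha\cong K_\alpha$ and verify both inclusions in the displayed equality---but the underlying argument is identical.
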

\begin{proof}
For the continuity of $\phi$ we argue that the composition
$p_\gamma\circ \phi$ for each composition with any projection
$p_\gamma\colon A\to K_\gamma$ onto
any factor of the product is continuous, which  is clear.
For the openness of the corestriction $E\to\phi(E)$ we claim that each 
$\phi(H_\gamma)$ is open in $\phi(E)$. Indeed,
$p^{-1}_\gamma(0)\cap \phi(E)=\phi(H_\gamma)$ is open in
 $\phi(E)$, since
$\{0\}$ is open in $K_\gamma$ by the discreteness of $K_\gamma$.
\end{proof}

\begin{Cor}\label{Cor2.8}
 The k-group $A$ has a closed subgroup $\phi(E)$ which is not a k-group.
 In particular, the underlying topological space of $A$ is not a k-space.
\end{Cor}

\begin{proof}
 By Theorem~\ref{NobleTheorem}, $A$ is a k-group, whereas $\phi(E)\cong E$ is
 not a k-group by Corollary~\ref{EIsNotAkGroup}.
 A closed subspace of a k-space
 is again a k-space, see \cite{eng}, Theorem 3.3.25, 
 hence the underlying space of $A$ is not a k-space.
\end{proof}

Now we study the dual $\widehat{E}$. The dual of the discrete abelian group 
$Z=\bigoplus_{\alpha\in I}Z_\alpha$ is the compact group
$\prod_{\alpha\in I}\widehat{Z_\alpha}$.
We remarked in Corollary~\ref{compactsarefinite}  that the 
compact-open topology on $C(E,\TT)$ coincides with the
topology of pointwise convergence.
Therefore we may view
$\widehat E$ as a subgroup of
$\yhwidehat{\bigoplus_{\alpha\in I}Z_\alpha}
  \cong \prod_{\alpha\in I} \widehat{Z_\alpha}$, algebraically
  and topologically. Let $\xi=(\xi_\alpha)_{\alpha\in I}\in
   \prod_{\alpha\in I}\widehat{Z_\alpha}$ be an element of $\widehat E$ 
and let $W\subseteq\TT$ be an identity neighborhood
which contains no nontrivial subgroup of $\TT$.
If $\xi$ is continuous considered as a map
$\bigoplus_{\alpha\in I}Z_\alpha\supseteq E\to\TT$,
then $\xi(H_\beta)\subseteq W$
holds for some $\beta\in I$. Then $\xi(H_\beta)=\{0\}$.
   In view of $E= K_\beta\oplus H_\beta$ we may identify
   $\widehat{K_\beta}$ with that subgroup {\rm Ann}$(H_\beta)$ of
   $\widehat E$ which annihilates    $H_\beta$. 
Hence $\xi\in {\rm Ann}(H_\beta)\cong\widehat{K_\beta}$.
Thus we may summarize:

\begin{Prop}\label{1.6}
Let $E=\bigoplus_{\alpha\in I}Z_\alpha$ with
the topology introduced in {\rm Definition \ref{1.1}}. Then
\[\widehat E=\bigcup_{\alpha\in I}{\rm Ann}(H_\alpha)
  \subseteq \prod_{\alpha\in I}\widehat{Z_\alpha}\]
is dense in  $\prod_{\alpha\in I}\widehat{Z_\alpha}$
 with the product topology, which induces on $\widehat E$ the
topology of pointwise convergence. Each
{\rm Ann}$(H_\alpha)\cong\widehat{K_\alpha}$ 
is compact and carries the topology of pointwise convergence.
Moreover, $\doublewidehat E\cong \bigoplus_{\alpha\in I}Z_\alpha$
is discrete.
\end{Prop} 
For the last claim we recall a generally well accepted fact.
\begin{Lem}
Let $T$ and $F$
be topological groups, with $F$
complete, $D$ a dense subgroup of $T$, and $\psi\colon D\to F$
a morphism.
Then $\psi$ has a unique continuous extension to a morphism
$\bar\psi\colon T\to F$,
\[
\begin{tikzcd}
    D \arrow{r}{\psi}\arrow[hook]{d} & F \\
    T \arrow{ru}{\exists!}[swap]{\bar\psi}
\end{tikzcd}
\]
\end{Lem}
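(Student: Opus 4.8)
The plan is to construct $\bar\psi$ by the standard completion argument and then verify the required properties in turn. Writing the groups multiplicatively, with $e$ for identity elements, I would first recall two standard facts: a continuous homomorphism of topological groups is uniformly continuous for the (two-sided) group uniformities, and in a topological group the closed neighbourhoods of the identity form a neighbourhood base. For $t\in T$ let $\mathcal F_t$ be the trace on $D$ of the neighbourhood filter of $t$ in $T$; since $D$ is dense this is a genuine filter on $D$, and as it converges to $t$ in $T$ it is a Cauchy filter for the uniformity $D$ inherits from $T$. By uniform continuity of $\psi$, the filter $\psi(\mathcal F_t)$ generated by the images $\psi(M)$, $M\in\mathcal F_t$, is Cauchy on $F$; as $F$ is complete and Hausdorff it converges to exactly one point, which I define to be $\bar\psi(t)$. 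There is no choice involved, so well-definedness is automatic. If $t\in D$, then $\psi(\mathcal F_t)\to\psi(t)$ by continuity of $\psi$, so $\bar\psi$ extends $\psi$; and the extension is unique because two continuous maps into the Hausdorff group $F$ that agree on the dense subset $D$ must coincide.

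For continuity, which is the substantive step, I would fix $t_0\in T$ and a neighbourhood $W$ of $\bar\psi(t_0)$, choose a closed neighbourhood $V$ of $e$ in $F$ with $\bar\psi(t_0)V\subseteq W$, and then pick a symmetric neighbourhood $U$ of $e$ in $T$ such that $d_1,d_2\in D$ and $d_1^{-1}d_2\in U^3$ imply $\psi(d_1)^{-1}\psi(d_2)\in V$. The claim is that $\bar\psi(t)\in W$ for every $t\in t_0U$. Indeed, for such a $t$ and any $d\in tU\cap D$, $d_0\in t_0U\cap D$ (both nonempty by density) one checks $d_0^{-1}d\in U^3$, hence $\psi(d)\in\psi(d_0)V$; letting $d$ range over the sets of $\mathcal F_t$ contained in $tU\cap D$ and using that the translate $\psi(d_0)V$ is closed gives $\bar\psi(t)\in\psi(d_0)V$; finally letting $d_0$ range over $t_0U\cap D$, so that $\psi(d_0)\to\bar\psi(t_0)$, and using once more that $V$ is closed, yields $\bar\psi(t_0)^{-1}\bar\psi(t)\in V$, i.e. $\bar\psi(t)\in\bar\psi(t_0)V\subseteq W$. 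Thus $\bar\psi$ is continuous at $t_0$, and $t_0$ was arbitrary.

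It remains to see that $\bar\psi$ is a homomorphism, and here I would invoke the uniqueness principle one dimension higher: the product $D\times D$ is dense in $T\times T$, the map $T\times T\to F$ sending $(s,t)$ to $\bar\psi(st)\,\bar\psi(t)^{-1}\bar\psi(s)^{-1}$ is continuous now that $\bar\psi$ is, and it is identically $e$ on $D\times D$ because $\psi$ is a homomorphism; hence it is identically $e$, so $\bar\psi(st)=\bar\psi(s)\bar\psi(t)$ for all $s,t\in T$. (Compatibility with inversion then follows, or is obtained the same way.)

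The only place needing real care is the continuity argument of the second paragraph: one must keep precise track of which approximating elements of $D$ lie in which neighbourhoods and carry out the passage to the limit twice — first in the $tU\cap D$ variable, then in the $t_0U\cap D$ variable — each time using that a closed neighbourhood of $e$ (and hence each of its translates) is a closed set, so that limits of filters lying in it remain in it. Everything else is formal, and with $\bar\psi$ in hand the triangle in the statement commutes by construction.
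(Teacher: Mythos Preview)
Your argument is correct and complete. The paper itself does not give a proof of this lemma at all; it simply cites Bourbaki (Chap.~III, Corollaire de la Proposition~8) and Stroppel (Cor.~8.48) for this well-known fact. Your proof is essentially the standard one found in those references: uniform continuity of homomorphisms, pushforward of the trace filter to a Cauchy filter in the complete target, and the density argument for the homomorphism property. So you have supplied what the paper chose to outsource, and there is nothing to correct.
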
  
\begin{proof}
    See \cite{bour}, Chap. III, \S3.3, Proposition 5,
or \cite{strop}, Cor.8.48.
\end{proof}

Accordingly, each character of $\widehat E$, i.e.,
each element of $\doublewidehat{E}$, uniquely
extends to a character of $\prod_{\alpha\in I}\widehat{Z_\alpha}$,
i.e. to an element of $\bigoplus_{\alpha\in I} Z_\alpha$.
Thus  from $\widehat E\to\prod_{\alpha\in I}\widehat{Z_\alpha}$
we obtain a bijective morphism of abelian topological groups
$\prod_{\alpha\in I}Z_\alpha\to\doublewidehat{E}$.
At this point we note the following.

\begin{Lem}\label{doublehatEdiscrete}
$\doublewidehat{E}$ is discrete and hence
\[{\eta_E}^{-1} \colon \doublewidehat{E}\to E\]
is  a bijective morphism of abelian topological groups.
\end{Lem}
\begin{proof}  Let
 $\Delta\subseteq\prod_{\alpha\in I}\widehat{Z_\alpha}$
denote the set of all $(\xi_\alpha)_{\alpha\in I}$
with at most one $\xi_\alpha\neq0$. The complement of $\Delta$
in $\prod_{\alpha\in I} \widehat{Z_\alpha}$ is
evidently open and therefore $\Delta$ is compact.
Moreover, $\Delta\subseteq\widehat E$,
and $s\Delta\subseteq\Delta$ for every $s\in\ZZ$.
Let $W\subseteq\TT$ be an identity neighborhood which
contains no nontrivial subgroup.
If $z\in \bigoplus_{\alpha\in I}Z_\alpha$ with
$\Delta(z)\subseteq W$, then
$\Delta(sz)=s\Delta(z)\subseteq\Delta(z)\subseteq W$,
and therefore $\Delta(z)=\{0\}$. But then
$z=0$. This shows that
the set $\{0\}=
\{z\in\bigoplus_{\alpha\in I}Z_\alpha: \Delta(z)\in W\}$
is open in $\doublewidehat E$, hence
$\doublewidehat E$ is discrete.
\end{proof}

Let us summarize the features of the group
$E$ we have discussed now!
Recall that $(Z_\alpha,+)$, $\alpha\in I$, 
is a family of nontrivial discrete abelian groups,
with compact dual $\widehat Z_\alpha$,
that $I$ denotes the set of all countable ordinals, and that
$Z=\bigoplus_{\alpha\in I}Z_\alpha$.
Moreover, $K_\alpha$ denotes the discrete group
$K_\alpha=\bigoplus_{\beta\leq\alpha}Z_\beta$,
and $A=\prod_{\alpha\in I}K_\alpha$ with
the product topology. Further,
$\widehat{K_\alpha}\cong \prod_{\beta\le\alpha}\widehat{Z_\beta}$
will be identified with a closed partial product of
$\prod_{\alpha\in I}\widehat{Z_\alpha}\cong
\widehat{\bigoplus_{\alpha\in I}Z_\alpha}$.

\begin{Thm}\label{1.9}
There is a  nondiscrete group topology
on the group $Z=\bigoplus_{\alpha\in I}Z_\alpha$
making it into an abelian topological group $E$
with the following properties.
\begin{enumerate}[\rm(1),nosep]
\item $E$ is isomorphic as a topological group
  to a closed subgroup $F$ of the group $A$.
In particular, $E$ is pro-discrete, pro-Lie and complete.

\item $E$ is not discrete, but every compact subset of $E$ is finite.

\item The character group
  \[\widehat E=\bigcup_{\alpha\in I}\widehat {K_\alpha}
        \subseteq\prod_{\alpha\in I}\widehat{Z_\alpha}\]
is a dense proper subgroup of the compact group
$\prod_{\alpha\in I}\widehat{Z_\alpha}$. In particular,
it is incomplete.
  
\item Its bidual
\[\doublewidehat{E}\cong \bigoplus_{\alpha\in I} Z_\alpha\]
is discrete.

\item The evaluation morphism $\eta_E\colon E\to\doublewidehat{E}$
is bijective, open,
 discontinuous, and is (trivially) continuous on every compact subset
 of $E$.
\item The group $A$ is a k-group, but the closed subgroup
  $F\subseteq A$, $F\cong E$, is not a k-group.
 \end{enumerate}
  \end{Thm}
  \begin{proof}
   Claim (1) and (6) were shown in Lemma~\ref{1.5} and Corollary~\ref{Cor2.8},
   and Claim (2) was shown in Corollary~\ref{compactsarefinite}. Claim (3)
   was shown in Proposition~\ref{1.6}, and Claim (4) and (5) were shown in
   Lemma~\ref{doublehatEdiscrete}. 
  \end{proof}

\begin{Rem}\label{kgroupremark}
The underlying topological space of the group $A$
above is not a k-space,
since otherwise $E$ would also be a k-space, as shown in \cite{eng} Theorem 3.3.25.
On the other hand, $A$ is a k-group by Theorem~\ref{NobleTheorem}.
It follows for the topology $\cT=k\cT$ of $A$ that $\cT\subsetneq\cT_{\max}$,
and that $\cT_{\max}$ is not a group topology for $A$.
Thus, k-groups are not necessarily k-spaces, and closed
subgroups of k-groups are not necessarily k-groups.
We note also that the weakly complete vector spaces $\RR^I$
are k-groups by Theorem~\ref{NobleTheorem}, but that the underlying topological space
$\RR^I$ is not a k-space for our  uncountable set $I$, as is noted in
Kelley's book~\cite{kelley} p.~240, Exercise J(b).
\end{Rem}
  
\begin{Rem}
  In particular, the example  $E$ and its dual $\widehat E$ show
that \emph{the dual of an abelian pro-Lie group may be incomplete}.
We recall that a group which satisfies Pontryagin duality is
called \emph{reflexive}.
Kaplan \cite{kaplan} proves that a product of reflexive groups
is again reflexive.
This applies in particular to the group $A$, which is a product of discrete
abelian groups. Thus $F\subseteq A$ is also an example
of \emph{a closed subgroup of a reflexive group which is not reflexive}.
\textsc{Noble} shows in \cite{nob}, Corollary 3.5 that a closed subgroup
of a \emph{countable} product of locally compact groups satisfies
Pontryagin duality. In our case, the ambient group $A$ is an 
uncountable product of discrete groups. If we put $Z_\alpha=\ZZ$
for all $\alpha$, then $A$ and $E$ are
torsion free. If we put $Z_\alpha=\ZZ(p)$ for all $\alpha$,
then $A$ and $E$ have exponent $p$.
\end{Rem}
As we mentioned above, the discovery of the group $E$ goes back
to \textsc{Leptin} in
\cite{lep}. It is mentioned in the literature repeatedly,
e.g.~by \textsc{Noble} in \cite{nobthesis,nob}, 
and by \textsc{Banaszczyk} in \cite{ban}.

\bigskip

Now we can complement some of the results in Section~1.
\begin{Ex}[Failures of limit preservation]
  We can now list some examples of limits that are not
  preserved by $\iota\colon \kab\to\tab$.
All our examples are based on the group $E$ in Theorem~\ref{1.9}
and the closed injective morphism $\phi\colon E\to A$.
    \begin{enumerate}
        \item \emph{Preservation of projective limits fails.}
          The group $E$ is (in $\tab$) a projective limit of
          the discrete groups $E/H_\alpha\cong K_\alpha$.
          These groups $K_\alpha$ are k-groups,
           but $E$ is not a k-group.
        \item \emph{Preservation of equalizers fails.}
          The morphism $\phi\colon E\to A$ is the
          $\tab$-equalizer of the diagram
        \[A\begin{tikzcd}\arrow[shift left]{r}{p}\arrow[shift
         right,swap]{r}{{\rm const}} &A/\phi(E),\end{tikzcd}
       \quad p(g)=g+\phi(E),\] and both $A$ and
         $A/\phi(E)$ are k-groups, but $E$ is not.
         \item \emph{Preservation of intersections fails.}
         Put \[D=\{(\phi(g),-\phi(g)):g\in E\}\subseteq A\times A
           \quad\text{ and }\quad B=(A\times A)/D.\]
         In $B$ we have the closed subgroups
         $P=(A\times \phi(E))/D\cong A$ and
         $Q=(\phi(E)\times A)/D\cong A$.
         Then $B,P,Q$ are k-groups, but
         $P\cap Q=(\phi(E)\times\phi(E))/D\cong E$ is not.
    \end{enumerate}
    We note that a functor that preserves products and equalizers
    (or products and intersections) preserves all limits.
\end{Ex}

\section{Abelian k-groups}

We consider now the full subcategory $\kab$ of $\tab$
of abelian k-groups in the category of topological abelian groups.
Since \cite{hofii} provides a good deal of information on abelian
pro-Lie groups we begin with the category of abelian pro-Lie groups.
We recall that the topological group $E$ is an abelian pro-Lie group
which fails to be a k-group.
According to \cite{hofii}, Proposition 4.40, the group homomorphism
$\eta_G\colon G\to\doublewidehat{G}$ is injective for
abelian pro-Lie groups. Moreover we have the following.

\begin{Rem}\label{4.1}
For every topological abelian group $G$, the group morphism
\[\eta_G\colon G\to \doublewidehat G\]
is k-continuous.
\end{Rem}
\begin{proof}
See the proof in \cite{hofi}, Theorem 7.7(iii) or \cite{adgb}
Proposition 13.4.1. 
\end{proof}
Thus $\eta_G$ is an injective k-continuous group homomorphism if $G$ is
an abelian pro-Lie group.
From \textsc{Aussenhofer}'s fundamental source \cite{aus} Corollary 21.5, however, we know
much more accurately that $\eta_G$ is bijective and open.
Aussenhofer's result is proved in the context of {\it nuclear groups}.
\footnote{The referee kindly pointed out that there is the following
variant of Aussenhofer's proof.
By Kaplan \cite{kaplan2} Theorem 1 and Theorem 2,
a closed subgroup of a product of locally compact groups
is dual-closed and dual-embedded.
The first part of the proof of Theorem 3.1 in Noble \cite{nob}
then shows that the group homomorphism $\eta_G$ is surjective and open
if $G$ is an abelian pro-Lie group.}
We offer here a different proof, using some categorical framework, 
which (hopefully) illuminates the fact that ${\eta_G}^{-1}$, rather than
$\eta_G$, is continuous.

\begin{Prop}\label{2.1}
For all abelian pro-Lie groups $G$
the morphism of abelian groups $\eta_G\colon G\to \doublewidehat{G}$
is bijective. Its inverse ${\eta_G}^{-1}\colon \doublewidehat{G}\to G$
is  continuous, i.e.,  is
a morphism in the category  $\tab$ of topological abelian groups.
\end{Prop}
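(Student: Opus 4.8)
The plan is to reduce everything to the classical \textsc{Pontryagin Duality} statement (A) for the finite-dimensional Lie quotients of $G$, using the injectivity of $\eta_G$ already quoted from \cite{hofii}, Proposition~4.40. Write $G=\varprojlim_{N}G/N$, where $N$ runs through the filter basis $\mathcal N$ of closed subgroups with $G/N$ a Lie group; such $G/N$ are finite-dimensional, hence locally compact, so (A) applies to them. Let $p_N\colon G\to G/N$ denote the limit projections and, for $M\subseteq N$, let $\pi_{NM}\colon G/M\to G/N$ be the bonding maps, so $p_N=\pi_{NM}\circ p_M$. I will use throughout that the dual of a continuous homomorphism is continuous for the compact-open topologies; in particular each $\widehat{p_N}\colon\widehat{G/N}\to\widehat G$, $\chi'\mapsto\chi'\circ p_N$, is an injective morphism and $\widehat{p_N}=\widehat{p_M}\circ\widehat{\pi_{NM}}$. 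The one nontrivial structural input I would isolate first is that \emph{every continuous character of $G$ factors through some $G/N$}, i.e.\ $\widehat G=\bigcup_{N\in\mathcal N}\widehat{p_N}(\widehat{G/N})$ as an abelian group: for prodiscrete $G$ this is the ``no small subgroups'' argument already used in Section~1, and for a general abelian pro-Lie group it is the statement that a continuous one-dimensional representation factors through a Lie quotient, available from the structure theory in \cite{hofii}.

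To prove $\eta_G$ surjective, take $\Psi\in\doublewidehat G$. For each $N$ the composite $\Psi\circ\widehat{p_N}$ is a continuous character of $\widehat{G/N}$, i.e.\ an element of $\doublewidehat{G/N}$, so (A) provides a unique $g_N\in G/N$ with $\chi'(g_N)=\Psi(\chi'\circ p_N)$ for all $\chi'\in\widehat{G/N}$. Substituting $\widehat{p_N}=\widehat{p_M}\circ\widehat{\pi_{NM}}$ gives $\chi'(g_N)=\chi'(\pi_{NM}(g_M))$ for all $\chi'\in\widehat{G/N}$, and since characters of the Lie group $G/N$ separate its points we get $g_N=\pi_{NM}(g_M)$. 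Hence $(g_N)_N$ is a coherent family and determines $g\in\varprojlim G/N=G$; and for any $\chi=\chi'\circ p_N\in\widehat G$ one has $\chi(g)=\chi'(g_N)=\Psi(\chi)$, i.e.\ $\eta_G(g)=\Psi$. Together with Proposition~4.40 of \cite{hofii}, this shows that $\eta_G$ is bijective.

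For the continuity of ${\eta_G}^{-1}$ it suffices, given bijectivity, to show that $\eta_G$ is open. A basic identity neighbourhood of $G$ has the form $U=p_N^{-1}(V)$ with $V$ a neighbourhood of $0$ in $G/N$; because $\eta_{G/N}$ is a homeomorphism by (A), $V$ contains a polar neighbourhood $P=\{h\in G/N:|\chi'(h)|<\eps\text{ for all }\chi'\in K'\}$ for some compact $K'\subseteq\widehat{G/N}$ and some $\eps>0$. Set $K:=\widehat{p_N}(K')$, a compact subset of $\widehat G$. If $\Psi=\eta_G(g)\in\doublewidehat G$ lies in the basic neighbourhood $\{\Psi:|\Psi(\chi)|<\eps\text{ for all }\chi\in K\}$, then $|\chi'(p_N(g))|=|\Psi(\widehat{p_N}\chi')|<\eps$ for all $\chi'\in K'$, so $p_N(g)\in P\subseteq V$, hence $g\in U$ and $\Psi\in\eta_G(U)$. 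Thus $\eta_G(U)$ contains an identity neighbourhood of $\doublewidehat G=\eta_G(G)$, so $\eta_G$ is open, and ${\eta_G}^{-1}$ is a continuous homomorphism, i.e.\ a morphism in $\tab$.

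The main obstacle is the factorisation lemma for characters; everything after it is bookkeeping with the compact-open topology and the locally compact case of duality. Two points need care. First, this lemma genuinely uses that $G$ is pro-Lie, and fails for general abelian topological groups. Second, the decomposition $\widehat G=\bigcup_N\widehat{G/N}$ is only algebraic: $\widehat G$ need not carry the colimit topology --- the group $E$ of Section~1 already shows this --- so one cannot prove openness by merely dualising the presentation $G=\varprojlim G/N$ as a topological statement, which is why the argument must pass through the polar-set computation above.
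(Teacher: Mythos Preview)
Your argument is correct, and it takes a genuinely different route from the paper's. The paper proceeds categorically: it uses the universal property of $G=\lim_j G_j$ to produce a \emph{continuous} map $\eta_G^!\colon\doublewidehat G\to G$ (the unique fill-in for the cone $\eta_{G_j}^{-1}\circ\doublewidehat{p_j}$), verifies $\eta_G^!\circ\eta_G=\id_G$ by chasing the limit diagram in $\ab$, and then \emph{cites} \textsc{Aussenhofer}'s Corollary~21.5 for the surjectivity of $\eta_G$; these together force $\eta_G^!=\eta_G^{-1}$.

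You instead prove surjectivity yourself, and this is where the factorisation lemma $\widehat G=\bigcup_N\widehat{p_N}(\widehat{G/N})$ does real work: it is exactly what is needed to check $\eta_G\circ\eta_G^!=\id_{\doublewidehat G}$ (in the paper's notation), which the categorical argument alone does not give. Your pointwise construction of $g$ from $\Psi$ is the same map $\eta_G^!$ written out elementwise; the polar-set computation for openness then reproves its continuity by hand. So the trade-off is: the paper's proof is shorter and more functorial but leans on the external (and less elementary) input from \textsc{Aussenhofer}, while your proof is self-contained modulo the no-small-subgroups factorisation, which for abelian pro-Lie groups is immediate from the fact that every identity neighbourhood contains some $N\in\mathcal N$ and $\RR/\ZZ$ has no small subgroups. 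Your closing caveat about the topology on $\widehat G$ not being the colimit topology is well taken and is precisely why the paper cannot simply dualise the limit presentation either.
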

Theorem~\ref{1.9}(5) shows that this result cannot be improved.
\begin{proof}
Assume that in the category $\tab$ of topological abelian
groups we have
$G=\lim_{j\in J} G_j$ for a projective  system $\{G_j: j\in J\}$
of Lie groups as in \cite{hofii}, p. 81. Such a presentation is possible
since $G$ is a pro-Lie group. For each $j\in I$
let $p_j\colon G\to G_j$ denote the  morphism in the system.
Since $G_j$ is an abelian Lie group we record that
\[\eta_{G_j}\colon G_j\to \doublewidehat{{G_j}}\]
is an \emph{isomorphism} for each $j\in I$. (See e.g.~\cite{hofi}, 
Theorem 7.63.)
Hence for each $j\in J$ we have a morphism 
\[\doublewidehat{G}\xrightarrow{\ {\doublewidehat{p_j}}\ } \doublewidehat{G_j}
\xrightarrow[\cong]{\ \eta_{G_j}^{-1}\ } G_j,\]
and by naturality of Pontryagin duality for the $G_i$, these maps form
a projective system.
By the limit property of $G=\lim_{j\in I}G_j$,
we have a unique morphism of topological abelian groups
$\eta_G^!\colon \doublewidehat{G}\to G$ such that, for each $j\in I$,
we have the commutative diagram
\[
\begin{tikzcd}
    \doublewidehat{G} \arrow{d}[swap]{\doublewidehat{{p_j}}} \arrow{r}{\eta_G^!} & G \arrow{d}{p_j}\\
    \doublewidehat{G_j} \arrow{r}{\eta_{G_j}^{-1}} &G_j,
\end{tikzcd}
 \qquad j\in J.
\]
We denote the forgetful functor
from the category $\tab$ of topological
abelian groups to the category $\ab$ of abelian groups by $G\mapsto UG$.
In the category of abelian groups $\ab$, 
    for each $j\in I$ we have the commutative diagram
\[
\begin{tikzcd}
UG \arrow{r}{U\eta_G}\arrow{d}[swap]{Up_j}&U\doublewidehat{G}\arrow{d}{U\doublewidehat{p_j}} \\
    UG_j\arrow{r}{U\eta_{G_j}}& U\doublewidehat{G_j}, 
\end{tikzcd}
\qquad j\in J,
\]
  by the naturality of $\eta$.
Staying in the category of abelian groups we combine these two diagrams
and obtain
\[
\begin{tikzcd}
    UG\arrow{r}{U\eta_G}\arrow{d}[swap]{Up_j} & U\doublewidehat{G} \arrow{r}{U\eta_G^!}\arrow{d}{U\doublewidehat{p_j}} &UG \arrow{d}{Up_j} \\
UG_j\arrow{r}{U\eta_{G_j}}&U\doublewidehat{G_j} \arrow{r}{U\eta_{G_j}^{-1}} &UG_j,
\end{tikzcd}
\qquad j\in J.\]
Accordingly,  
since we have $\eta_{G_j}^{-1}\circ \eta_{G_j}=\id_{G_j}$
for all $j\in J$, we have a commutative diagram of
 abelian groups
\[
\begin{tikzcd}
UG\arrow{rr}{U(\eta_G^!\circ \eta_G)} \arrow{d}[swap]{Up_j} &&UG \arrow{d}{Up_j}\\
    UG_j\arrow[equal]{rr} && UG_j,
    \end{tikzcd} 
    \qquad j\in J.\]
The grounding functor $U$ of the category of topological abelian
groups to the category of abelian groups is right adjoint
to the functor 
which attaches to an abelian group the discrete
abelian topological group it supports. Right adjoint functors
preserve limits (see e.g.~\cite{hofi}, Theorem A3.52). 
Thus  $G=\lim_{j\in J} G_j$ implies that
$UG=\lim_{j\in J}UG_j$ holds  in the category of
 abelian groups. Since the fill-in morphism of limits is
 unique, we conclude 
 that \[U({\eta_G^!}\circ \eta_G)=\id_G.\eqno{(*)}\]
 
 However, at this point we need to recall a consequence of
 \textsc{Aussenhofer}'s result  \cite{aus} Corollary 21.5., 
 namely,  that
 for abelian pro-Lie groups, $U\eta_G$ is \emph{surjective}.
 Then $(*)$ implies  that 
 $U{\eta_G^!}=(U\eta_G)^{-1}$, i.e., 
 that $U\eta_G$ is invertible
in the category of abelian groups and so that it is bijective,
  and that
its inverse ${\eta_G}^{-1}=\eta_G^!$ is a morphism in the category of topological
abelian groups, as asserted.
\end{proof}

Again we emphasize that it is illustrated
by the example of the abelian pro-Lie group $E$
of the preceding section  that $\eta_E$ itself
may fail to be continuous.

The existence of the $\tab$-morphism
${\eta_G}^{-1}\colon \doublewidehat{G}\to G$
of (2.1) allows us to pass to the duals to obtain
the morphism
\[\widehat{{\eta_G}^{-1}}\colon\widehat G\to\widehat{\doublewidehat{G}},\qquad
(\forall \chi\in\widehat G,\,\omega\in{\doublewidehat{G}})\
\widehat{{\eta_G}^{-1}}(\chi)\cdot\omega=\omega(\chi).\eqno(1)\]
The topological abelian group $H=\widehat G$ has its own evaluation
morphism $\eta_H$ which by its very definition  is given
by
\[\eta_H\colon H\to\doublewidehat H,\quad
(\forall \chi\in H, \omega\in \widehat H)\
\eta_H(\chi)(\omega)=\omega(\chi). \eqno(2)\]
Comparing statements (1) and (2) we conclude the following

\begin{Cor}\label{2.2}
For any topological abelian group $G$ for which
$\eta_G$ has an inverse ${\eta_G}^{-1}$ which is a morphism of topological abelian
groups,
\[\widehat{{\eta_G}^{-1}}\colon\widehat G\to\widehat{\doublewidehat{G}}
\qquad\text{ equals }\qquad
\eta_{\widehat G}\colon \widehat G\to \widehat{\doublewidehat{G}}.\]
\end{Cor}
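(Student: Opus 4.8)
The plan is to prove the asserted equality of morphisms by a direct pointwise comparison, unwinding both sides against the defining formula for the evaluation map. First I would recall that for a morphism $f\colon A\to B$ of topological abelian groups its dual $\widehat f\colon\widehat B\to\widehat A$ is given by $\widehat f(\psi)=\psi\circ f$. Since by hypothesis $\eta_G^{-1}\colon\doublewidehat G\to G$ is a morphism of topological abelian groups, $\widehat{\eta_G^{-1}}\colon\widehat G\to\widehat{\doublewidehat G}$ is well defined, and for $\chi\in\widehat G$ it is $\widehat{\eta_G^{-1}}(\chi)=\chi\circ\eta_G^{-1}$.

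Next I would fix $\chi\in\widehat G$ and $\omega\in\doublewidehat{G}$ and compute $\widehat{\eta_G^{-1}}(\chi)(\omega)=\chi\bigl(\eta_G^{-1}(\omega)\bigr)$. Writing $g:=\eta_G^{-1}(\omega)\in G$, so that $\eta_G(g)=\omega$, and invoking the defining identity $\eta_G(g)(\chi)=\chi(g)$, one gets $\omega(\chi)=\eta_G(g)(\chi)=\chi(g)=\chi\bigl(\eta_G^{-1}(\omega)\bigr)$. Hence $\widehat{\eta_G^{-1}}(\chi)(\omega)=\omega(\chi)$, which is precisely the formula $(1)$ recorded just before the statement. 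On the other hand, specializing the definition of the evaluation morphism to the topological abelian group $H=\widehat G$ (so that $\widehat H=\doublewidehat{G}$) gives $\eta_{\widehat G}(\chi)(\omega)=\omega(\chi)$, which is formula $(2)$.

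Finally, since a character of $\doublewidehat{G}$ is determined by its values on $\doublewidehat{G}$, the two elements $\widehat{\eta_G^{-1}}(\chi)$ and $\eta_{\widehat G}(\chi)$ of $\widehat{\doublewidehat G}$, which agree on every $\omega\in\doublewidehat{G}$, must coincide; as $\chi\in\widehat G$ was arbitrary, the morphisms $\widehat{\eta_G^{-1}}$ and $\eta_{\widehat G}$ are equal. There is essentially no obstacle here: the whole argument is bookkeeping with evaluation maps. The single point that genuinely uses the hypothesis — rather than the fact that $\eta_G$ is always defined — is the step $\omega=\eta_G\bigl(\eta_G^{-1}(\omega)\bigr)$, which requires $\eta_G$ to be bijective with $\eta_G^{-1}$ a two-sided inverse; the continuity of $\eta_G^{-1}$ is needed only to ensure that $\widehat{\eta_G^{-1}}$ is a morphism of topological abelian groups, so that the claimed identity holds in the intended category.
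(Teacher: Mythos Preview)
Your proof is correct and follows essentially the same approach as the paper: a direct pointwise comparison establishing formulas (1) and (2) and then observing they coincide. You supply more detail in deriving (1) from the definitions (via the substitution $g=\eta_G^{-1}(\omega)$), whereas the paper simply states (1) and (2) and notes the corollary follows by comparison, but the underlying argument is identical.
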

This corollary allows us to formulate and prove the following result.

\begin{Thm}\label{pontr}
For an abelian 
pro-Lie group $G$ the following statements are equivalent.

\begin{enumerate}[\rm(1),nosep]
    \item $G$ is the character group of
an abelian topological group $H$ for which $\eta_H$ is bijective and open.
\item $\eta_G\colon G\to \doublewidehat G$
is  an  isomorphism of abelian pro-Lie groups. 
\end{enumerate}
\end{Thm}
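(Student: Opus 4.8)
The plan is to prove the two implications separately, with the non-trivial content being the passage from (1) to (2).

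For the implication \textbf{(2)}$\Rightarrow$\textbf{(1)}: this is essentially immediate. If $\eta_G$ is an isomorphism of abelian pro-Lie groups, then in particular $G \cong \doublewidehat{G} = \widehat{\widehat{G}}$, so $G$ is the character group of the abelian topological group $H = \widehat{G}$. It remains to observe that $\eta_H$ is bijective and open: since $H = \widehat G$ is itself a dual group, and by Proposition~\ref{2.1} (applied after noting $\widehat G$ need not be pro-Lie, so one should instead invoke Corollary~\ref{2.2}) we get that $\eta_{\widehat G} = \widehat{\eta_G^{-1}}$; but here $\eta_G$ is an isomorphism, hence $\eta_G^{-1}$ is an isomorphism, hence its dual $\widehat{\eta_G^{-1}} = \eta_H$ is an isomorphism, in particular bijective and open. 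So I would first check that Proposition~\ref{2.1} and Corollary~\ref{2.2} genuinely apply to $G$ here (they do, $G$ being pro-Lie), extract $\eta_G^{-1}$ as a $\tab$-morphism, and then dualize.

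For the implication \textbf{(1)}$\Rightarrow$\textbf{(2)}: suppose $G = \widehat H$ for some abelian topological group $H$ with $\eta_H\colon H \to \doublewidehat{H}$ bijective and open. First, by Proposition~\ref{2.1}, since $G$ is pro-Lie, $\eta_G$ is bijective and $\eta_G^{-1}$ is a continuous morphism of topological abelian groups. So the only thing missing for (2) is that $\eta_G$ itself is \emph{continuous} (equivalently open as a map into $\doublewidehat G$), i.e.\ that $\eta_G$ is a homeomorphism. The key idea is to use the hypothesis on $H$ to identify $\eta_G$ with the dual of an isomorphism. Namely, $\eta_H$ bijective and open means $\eta_H$ is an isomorphism $H \cong \doublewidehat H$, so $\eta_H^{-1}$ is an isomorphism, and dualizing gives an isomorphism $\widehat{\eta_H^{-1}}\colon \widehat H \to \widehat{\doublewidehat H}$, i.e.\ an isomorphism $G \to \widehat{\doublewidehat H} = \doublewidehat{G}$ (using $G = \widehat H$, so $\widehat{\doublewidehat H} = \widehat{\widehat{\widehat H}} = \doublewidehat{\widehat H} = \doublewidehat G$). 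Then I would invoke Corollary~\ref{2.2}: for $H$, whose $\eta_H$ is invertible as a $\tab$-morphism, we have $\widehat{\eta_H^{-1}} = \eta_{\widehat H} = \eta_G$. Therefore $\eta_G$ \emph{equals} the isomorphism $\widehat{\eta_H^{-1}}$, hence is itself an isomorphism of topological abelian groups; and since $G$ is pro-Lie and $\doublewidehat G \cong G$ via this map, $\doublewidehat G$ is a pro-Lie group too, so $\eta_G$ is an isomorphism of abelian pro-Lie groups, which is exactly (2).

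The main obstacle — and the step I would be most careful about — is the bookkeeping with the iterated dual identifications: one must be sure that the canonical identification $G = \widehat H$ turns $\widehat{\doublewidehat H}$ into $\doublewidehat G$ compatibly, and that the "$\eta_H$ bijective and open" hypothesis really yields $\eta_H^{-1}$ as a morphism in $\tab$ so that Corollary~\ref{2.2} is applicable to $H$ (Corollary~\ref{2.2} is stated for groups whose $\eta$ has a $\tab$-morphism inverse, and "bijective and open" is precisely the condition that the set-theoretic inverse is continuous). Once the identification $\eta_G = \widehat{\eta_H^{-1}}$ is in hand, everything else is formal: a dual of an isomorphism is an isomorphism, and being isomorphic to a pro-Lie group inside $\tab$ forces the pro-Lie property. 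I do not expect any genuinely hard analytic input beyond what Proposition~\ref{2.1}, Corollary~\ref{2.2}, and Aussenhofer's surjectivity result (already invoked in~\ref{2.1}) provide.
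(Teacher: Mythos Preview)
Your overall approach mirrors the paper's: both directions rely on Corollary~\ref{2.2} to identify $\eta_{\widehat H}$ with $\widehat{\eta_H^{-1}}$, together with Proposition~\ref{2.1} for the pro-Lie group $G$. However, there is a genuine slip in your (1)$\Rightarrow$(2) argument.

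You write that ``$\eta_H$ bijective and open means $\eta_H$ is an isomorphism $H\cong\doublewidehat H$.'' This is not so: bijective and open gives you that the set-theoretic inverse $\eta_H^{-1}$ is continuous, but says nothing about the continuity of $\eta_H$ itself---and indeed the paper's example $E$ shows $\eta_H$ can fail to be continuous under exactly this hypothesis. Consequently $\eta_H^{-1}$ is only a $\tab$-morphism (a continuous bijection), not an isomorphism, and its dual $\widehat{\eta_H^{-1}}$ is therefore only known to be \emph{continuous}, not an isomorphism. Your line ``dualizing gives an isomorphism $\widehat{\eta_H^{-1}}$'' is thus unjustified as stated.

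The fix is that you don't need this stronger claim. You already invoked Proposition~\ref{2.1} to obtain that $\eta_G$ is bijective with continuous inverse; all that remains is continuity of $\eta_G$, and for that you only need $\widehat{\eta_H^{-1}}$ to be continuous. Since $\eta_H^{-1}$ is a $\tab$-morphism by hypothesis, its dual $\widehat{\eta_H^{-1}}$ is continuous, and Corollary~\ref{2.2} (which, as you correctly observe in your final paragraph, requires exactly that $\eta_H^{-1}$ be a $\tab$-morphism---not that $\eta_H$ be one) gives $\eta_G=\eta_{\widehat H}=\widehat{\eta_H^{-1}}$. This is precisely the paper's argument. Your (2)$\Rightarrow$(1) is in fact more explicit than the paper's and is correct as written, since there $\eta_G$ genuinely \emph{is} assumed to be an isomorphism, so the dualizing step is legitimate.
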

\begin{proof}
(1) implies (2): Since ${\eta_H}^{-1}\colon  \doublewidehat{H}\to H$
is a morphism by hypothesis,
so is its dual $\widehat{{\eta_H}^{-1}}\colon \widehat H\to\widehat{\doublewidehat{H}}$. It agrees 
with  the  morphism $\eta_G\colon G\to\doublewidehat{G}$ by Corollary \ref{2.2}.

(2) implies (1): Condition (2) allows us to apply Corollary \ref{2.2}, to
set $H= \widehat G$,  and to conclude that $\widehat H \cong \doublewidehat G\cong G$. 
 \end{proof}

 In Theorem~\ref{1.9} above 
 we  saw the example of a nondiscrete prodiscrete  topological abelian
 group $E$ on the underlying group
 $\bigoplus_{\alpha\in I}Z_\alpha$, which showed 
 that $\eta_E$ can fail to be continuous in general, and so,
 accordingly, that ${\eta_E}^{-1}$ can fail to be open.
 
 That example also illustrates
 $\doublewidehat E\cong \bigoplus_{\alpha\in I}Z_\alpha$ with the
 discrete topology and
 $\widehat{\doublewidehat{E}}\cong\prod_{\alpha\in I}\widehat{Z_\alpha}$.

The subcategory of all real topological vector spaces $G$
which are pro-Lie groups is  the category of
\emph{weakly complete real vector spaces}, whose
 dual category is the category
of all real topological vector spaces $\widehat G$ 
 endowed with the finest possible
vector space topology, as is shown in \cite{hofi}, Appendix 7,
pp.\ 932ff., or \cite{hofii}, Appendix 3, pp.\ 737ff.,
and such
topological vector spaces are pro-Lie groups only as long as they
are finite dimensional.  
Accordingly,
by way of example,
if $G=\RR^{(J)}$ for an infinite set $J$ (e.g.\ $J=\NN$), then
$G$, equipped with the finest locally convex topology,
is the dual of the abelian pro-Lie group $\RR^J$ 
(see \cite{hofi}, p.\-932ff., \cite{hofii}, p.\-737ff.)
illustrating Theorem  \ref{pontr} and showing that the category of 
abelian
pro-Lie groups fails to be closed under passage to the duals.

A core result on any abelian pro-Lie group $G$
reads as follows (see \cite{hofii}, Theorem 4.22 on pp.~144, 145):

\begin{Thm} Every abelian pro-Lie group $G$ is isomorphic to
  $V\times H$ where $V$ is a weakly complete real vector group
  isomorphic to $\RR^J$ for some set $J$, and the idenity component $H_0$ 
  of $H$ is  compact,
  and the unique union $C={\rm comp}(G)$ of all compact subgroups of $G$ is 
  a closed subgroup of $H$. Moreover, $H/C$ does not contain any nontrivial
  compact subgroup.
\end{Thm}
  We know that $kV=V$ by Theorem~\ref{NobleTheorem}. On the other hand, we
  have the example of the group $E$ from Section 2. If every
  element of the $Z_\alpha$ has finite order, then ${\rm comp}(E)=E$. and $kE\neq E$.
  Hence $kC\neq C$ in general.

\bigskip

From Remark~\ref{4.1} and Proposition~\ref{2.1}
we get immediately a result stated in the 
introduction.
\begin{Prop}\label{kProLie}
    If $G$ is an abelian pro-Lie group which is also a $k$-group,
    then there is an isomorphism of topological groups
    \[
    G\xrightarrow[\ \cong\ ]{\eta_G}\doublewidehat{G}.
    \]
    In particular, $\doublewidehat G$ is then a k-group.
\end{Prop}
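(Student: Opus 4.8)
The plan is to combine the two immediately preceding ingredients: Proposition~\ref{2.1}, which already yields that $\eta_G$ is a bijection with continuous inverse for every abelian pro-Lie group $G$, and Remark~\ref{4.1}, which says that $\eta_G$ is always k-continuous. The only thing separating us from ``isomorphism of topological groups'' is the continuity of $\eta_G$ itself, and this is precisely what the hypothesis that $G$ is a k-group supplies.

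Concretely, I would proceed as follows. First, invoke Proposition~\ref{2.1}: since $G$ is an abelian pro-Lie group, $\eta_G\colon G\to\doublewidehat{G}$ is bijective and $\eta_G^{-1}$ is a morphism in $\tab$, that is, $\eta_G$ is open. Next, by Remark~\ref{4.1} the homomorphism $\eta_G$ is k-continuous; since $G$ is a k-group, Definition~\ref{kgroupdef} forces $\eta_G$ to be continuous. A continuous, open, bijective group homomorphism is an isomorphism of topological groups, so $\eta_G\colon G\xrightarrow{\ \cong\ }\doublewidehat{G}$, as claimed.

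For the final assertion, I would argue that being a k-group is a property invariant under isomorphism of topological groups: a k-continuous homomorphism out of $\doublewidehat{G}$ pulls back along $\eta_G$ (which, together with $\eta_G^{-1}$, preserves compact subspaces) to a k-continuous, hence continuous, homomorphism out of $G$, and then pushes forward again to show the original map is continuous. Since $G$ is a k-group and $\doublewidehat{G}\cong G$, we conclude that $\doublewidehat{G}$ is a k-group.

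I do not expect any genuine obstacle: the substantive content is entirely in Proposition~\ref{2.1} (which itself rests on Aussenhofer's surjectivity result) and in Remark~\ref{4.1}. The single point requiring care is not to conflate \emph{open} with \emph{continuous}: Proposition~\ref{2.1} gives openness of $\eta_G$ for free but says nothing about its continuity --- indeed Leptin's example $E$ of Theorem~\ref{1.9} shows $\eta_E$ can be discontinuous --- and it is exactly the k-group hypothesis, via k-continuity of $\eta_G$, that closes that gap.
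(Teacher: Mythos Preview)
Your proof is correct and follows exactly the approach the paper indicates: the paper simply says the result follows ``immediately'' from Remark~\ref{4.1} and Proposition~\ref{2.1}, and you have spelled out precisely those two steps---k-continuity of $\eta_G$ plus the k-group hypothesis gives continuity, while Proposition~\ref{2.1} supplies bijectivity and openness. Your closing remarks on the invariance of the k-group property under isomorphism and on the role of Leptin's example are apt and add nothing not already implicit in the paper's one-line justification.
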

Let $G$ be a topological abelian group.
If $\eta_G\colon G\to\doublewidehat{G}$ has a continuous inverse, as is the case for abelian
pro-Lie groups by Proposition \ref{2.1}, we have also the $\kab$-morphism
$k({\eta_G}^{-1}):k\doublewidehat G\to kG$. Accordingly, we get 

\begin{Prop}\label{4.2}  
For an abelian  {\rm pro-Lie} group $G$,
  there is an isomorphism
\[kG\xrightarrow[\cong]{k\eta_G} k\doublewidehat G\]
  inside $\kab$ and there is a commutative
  diagram
  \[
    \begin{tikzcd} kG \arrow{d}[swap]{\kappa_G} &
\arrow{l}{\cong}[swap]{k{\eta_G}^{-1}}k\doublewidehat G
\arrow{d}{\kappa_{\doublewidehat G}} \\
G& \arrow{l}[swap]{{\eta_G}^{-1}}\doublewidehat G
  \end{tikzcd} 
  \]
of bijective $\tab$-morphisms. 
 \end{Prop}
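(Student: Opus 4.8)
The plan is to deduce Proposition~\ref{4.2} directly from Proposition~\ref{2.1} by applying the functor $k\colon\tab\to\kab$ together with the naturality of $\kappa$. First I would recall that by Proposition~\ref{2.1}, for an abelian pro-Lie group $G$ the evaluation morphism $\eta_G$ is a bijective $\tab$-morphism with continuous inverse ${\eta_G}^{-1}\colon\doublewidehat G\to G$, so that ${\eta_G}^{-1}$ is an isomorphism in $\tab$ precisely when $\eta_G$ itself is continuous (which may fail). Applying the right adjoint coreflector $k$ to the $\tab$-morphism ${\eta_G}^{-1}$ yields a $\kab$-morphism $k{\eta_G}^{-1}\colon k\doublewidehat G\to kG$. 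The key point is that $k{\eta_G}^{-1}$ is automatically an \emph{isomorphism} in $\kab$: indeed $k$ is a functor, and ${\eta_G}^{-1}$ is a bijection of the underlying groups which, being the identity on underlying sets after the canonical identification, preserves the family of compact subsets in both directions; hence $k{\eta_G}^{-1}$ is again a bijection of underlying groups and its set-theoretic inverse is likewise $k$-continuous, so it is a two-sided inverse in $\kab$. Equivalently, one observes that $\eta_G$ and ${\eta_G}^{-1}$ induce mutually inverse bijections between the compact subsets of $G$ and those of $\doublewidehat G$, so the $k$-topologies on the two underlying groups correspond, and $k\eta_G$ and $k{\eta_G}^{-1}$ are mutually inverse $\kab$-morphisms.

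Next I would produce the commutative square. The morphism $\kappa$ is a natural transformation $\iota k\Rightarrow \mathrm{id}_{\tab}$ (the counit of the adjunction from Remark~\ref{coreflexive}), so naturality applied to the $\tab$-morphism $f={\eta_G}^{-1}\colon\doublewidehat G\to G$ gives exactly the commutative square
\[
\begin{tikzcd}
k\doublewidehat G \arrow{r}{k{\eta_G}^{-1}}\arrow{d}[swap]{\kappa_{\doublewidehat G}} & kG \arrow{d}{\kappa_G}\\
\doublewidehat G \arrow{r}{{\eta_G}^{-1}} & G
\end{tikzcd}
\]
which is the diagram in the statement (read with the horizontal arrows pointing from $k\doublewidehat G$ to $kG$ and from $\doublewidehat G$ to $G$). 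All four maps are bijective: $\kappa_G$ and $\kappa_{\doublewidehat G}$ are the identity maps on underlying groups (hence bijective $\tab$-morphisms, as noted after Proposition~\ref{adjoint}), ${\eta_G}^{-1}$ is bijective by Proposition~\ref{2.1}, and $k{\eta_G}^{-1}$ is bijective as just explained. Finally, the displayed isomorphism $kG\xrightarrow{k\eta_G}k\doublewidehat G$ inside $\kab$ is just the inverse of $k{\eta_G}^{-1}$ in the category $\kab$, which exists by the argument of the first paragraph; equivalently it is $k$ applied to the $\tab$-morphism $\eta_G$ whenever the latter is continuous, but the coreflector supplies the inverse in $\kab$ even when $\eta_G$ is not continuous in $\tab$.

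There is essentially no hard part here: the only subtlety worth spelling out is \emph{why} $k{\eta_G}^{-1}$ is invertible in $\kab$ even though $\eta_G$ need not be invertible in $\tab$. This is precisely the phenomenon already displayed by the group $E$ of Theorem~\ref{1.9}, where $\eta_E$ is discontinuous yet $kE$ and $k\doublewidehat E$ are both discrete and hence isomorphic. The mechanism is that $k$ only remembers the group together with its lattice of compact subspaces, and a bijective continuous morphism whose inverse is also continuous automatically induces a bijection on compact subspaces in both directions — so after coreflection the morphism and its set-theoretic inverse both become morphisms. I would state this as a one-line lemma (or fold it into the proof) and then let naturality of $\kappa$ do the rest.
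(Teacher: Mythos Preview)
Your approach is the paper's own: invoke Proposition~\ref{2.1} for the continuous bijection ${\eta_G}^{-1}$, apply the coreflector $k$, and obtain the square from naturality of $\kappa$. The paper's entire argument is the one-sentence preamble to the proposition together with Remark~\ref{4.1}.

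There is, however, one point you leave unjustified. Your claim that $k{\eta_G}^{-1}$ is invertible in $\kab$ rests on the assertion that $\eta_G$ and ${\eta_G}^{-1}$ induce mutually inverse bijections between the compact subsets of $G$ and of $\doublewidehat G$. The continuous map ${\eta_G}^{-1}$ carries compacts forward, but you never say why $\eta_G$ sends a compact $C\subseteq G$ to a compact subset of $\doublewidehat G$. That is precisely Remark~\ref{4.1} (the k-continuity of $\eta_G$), which you do not cite. Your final paragraph in fact misstates the mechanism: ``a bijective continuous morphism whose inverse is also continuous'' is simply a homeomorphism, and then nothing needs arguing. What you want is ``whose inverse is \emph{k-continuous}'', and that nontrivial input is supplied by Remark~\ref{4.1}. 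Once you insert that reference, your argument is complete and coincides with the paper's.
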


Quite generally, we have 
the morphism $\widehat{\kappa_G}\colon \widehat G\to\widehat{kG}$
and its dual 
$\doublewidehat{\kappa_G}\colon\doublewidehat{kG}\to \doublewidehat G$.

We note that there are 3 commutative square diagrams of
$\kab$-morphisms as follows:

Firstly,
\[
\begin{tikzcd}
    kG \arrow{r}{\eta_{kG}}\arrow{d}[swap]{\kappa_G}
       &\doublewidehat{kG}
    \arrow{d}{\doublewidehat{\kappa_G}}\\ 
    G\arrow[dashed]{r}{(\eta_G)} & \doublewidehat G.
\end{tikzcd}
\]

Secondly,
\[
\begin{tikzcd}
    kG \arrow{r}{k\eta_G}\arrow{d}[swap]{\kappa_G} 
        &k\doublewidehat{G} 
    \arrow{d}{\kappa_{\doublewidehat{G}}}\\
    G \arrow[dashed]{r}{(\eta_G)} & \doublewidehat{G}.
\end{tikzcd}
\]

Thirdly,
\[
\begin{tikzcd}
    kG \arrow{r}{\eta_{kG}}\arrow{d}[swap]{k\eta_G} 
        &\doublewidehat{kG} 
    \arrow{d}{\doublewidehat{\kappa_G}}\\
k\doublewidehat{G}\arrow{r}{\kappa_{\doublewidehat{G}}}&\doublewidehat{G}.
\end{tikzcd}
\]

 In these commutative diagrams, the dotted arrows may not be continuous,
but all arrows are morphisms in the category $\ab$.
 All of them have one and the same diagonal morphism

\[d_G=\doublewidehat{\kappa_G}\circ\eta_{kG}
  =\kappa_{\doublewidehat G}\circ k\eta_G
  =\eta_G\circ \kappa_G.\]

By Proposition 3.6 the morphism $\eta_{kG}$ factors through
   $\kappa_{\doublewidehat{kG}}\colon k\doublewidehat{kG}\to
   \doublewidehat{kG}$, so that
   $\eta_{kG}=\kappa_{\doublewidehat{kG}}\circ {\eta_{kG}}$
   and we obtain
$d_G=\doublewidehat{\kappa_G}\circ \kappa_{\doublewidehat{kG}}
\circ {\eta_{kG}}'$.   

\bigskip

The morphism $\eta_G$ in the category of abelian groups
$\ab$ is k-continuous by Remark~\ref{4.1}
and is bijective and open if $G$ is an abelian pro-Lie group
by Proposition~\ref{2.1}. Since both $\kappa_G$ and $\eta_G$
are bijective  in this case, we obtain

\begin{Rem} \label{Rem 4.4} For every abelian pro-Lie group $G$,
  the morphism $d_G\colon kG\to\doublewidehat G$ is bijective.
\end{Rem}

\medskip

This discussion  has led to various representations of the significant
morphism $d_G= kG\to\doublewidehat G$. Chances are that it is
an isomorphism for abelian pro-Lie groups.
 Since we know from Proposition  \ref{4.2} that $k\eta_G$
is an isomorphism for abelian pro-Lie groups, we record the following
observations:

\begin{Cor} \label{final} For an abelian pro-Lie group $G$, the
  following statements are equivalent:

  {\rm(i)} $\doublewidehat G$ is a k-group.

  {\rm(ii)} $\kappa_{\doublewidehat G}$ is an isomorphism.

  {\rm(iii)} $d_G$ is an open morphism.

  {\rm(iv)} $d_G$ is an isomorphism of topological groups.  
\end{Cor}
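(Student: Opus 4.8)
The plan is to prove the cycle of implications (i) $\Rightarrow$ (ii) $\Rightarrow$ (iii) $\Rightarrow$ (iv) $\Rightarrow$ (i), using as the main input the earlier facts that $d_G = \doublewidehat{\kappa_G}\circ\eta_{kG} = \kappa_{\doublewidehat G}\circ k\eta_G = \eta_G\circ\kappa_G$ is a bijective morphism of abelian groups (Remark~\ref{Rem 4.4}), that $k\eta_G\colon kG\to k\doublewidehat G$ is an isomorphism in $\kab$ (Proposition~\ref{4.2}), and that $\kappa_H\colon kH\to H$ is always a bijective $\tab$-morphism which is an isomorphism precisely when $H$ is a k-group (Proposition~\ref{adjoint}).

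First I would do (i) $\Rightarrow$ (ii): if $\doublewidehat G$ is a k-group then by Proposition~\ref{adjoint} the bijective morphism $\kappa_{\doublewidehat G}\colon k\doublewidehat G\to \doublewidehat G$ is an isomorphism. Next, (ii) $\Rightarrow$ (iii): since $d_G = \kappa_{\doublewidehat G}\circ k\eta_G$ and $k\eta_G$ is an isomorphism by Proposition~\ref{4.2}, if $\kappa_{\doublewidehat G}$ is an isomorphism then $d_G$ is a composite of isomorphisms, hence an isomorphism, hence in particular open. For (iii) $\Rightarrow$ (iv): we already know from Remark~\ref{Rem 4.4} that $d_G$ is a continuous bijection; an open continuous bijection is an isomorphism of topological groups, so (iii) gives (iv) at once. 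Finally, (iv) $\Rightarrow$ (i): write $d_G = \doublewidehat{\kappa_G}\circ\eta_{kG}$; since $kG$ is a k-group and a pro-Lie group — here one must check $kG$ is pro-Lie, but this is not needed: instead use that $\eta_{kG}$ is an isomorphism because $kG$ is a pro-Lie k-group by Proposition~\ref{kProLie}, which requires $kG$ pro-Lie. To sidestep that subtlety, I would instead argue directly: from $d_G = \kappa_{\doublewidehat G}\circ k\eta_G$ with $k\eta_G$ an isomorphism, (iv) forces $\kappa_{\doublewidehat G}$ to be an isomorphism, and a topological group $H$ with $\kappa_H$ an isomorphism equals $kH$, hence is a k-group by Proposition~\ref{adjoint}; applying this to $H = \doublewidehat G$ gives (i).

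The step I expect to be the main obstacle — or at least the one requiring care — is making sure we never secretly need ``$kG$ is a pro-Lie group,'' which is not established in the excerpt and may well be false (refining the topology can destroy the pro-Lie property, just as it destroys being a k-space for $A$). The clean route is to route every implication through the two factorizations $d_G = \kappa_{\doublewidehat G}\circ k\eta_G$ and $d_G = \doublewidehat{\kappa_G}\circ\eta_{kG}$ together with the single hard fact that $k\eta_G$ is already known to be an isomorphism (Proposition~\ref{4.2}); then (ii), (iii), (iv) are all visibly equivalent to ``$\kappa_{\doublewidehat G}$ is an isomorphism,'' and (i) is equivalent to the same thing by the universal property of $\kappa$. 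So the proof is essentially: (ii) $\iff$ (i) is Proposition~\ref{adjoint} applied to $\doublewidehat G$; (ii) $\iff$ (iv) because $d_G = \kappa_{\doublewidehat G}\circ k\eta_G$ with $k\eta_G$ an isomorphism; and (iii) $\iff$ (iv) because a continuous open bijection of topological groups is an isomorphism while an isomorphism is in particular open, and $d_G$ is always a continuous bijection by Remark~\ref{Rem 4.4}.
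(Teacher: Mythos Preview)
Your proposal is correct and matches the paper's intended argument. The paper gives no explicit proof of this corollary; it simply records it as an immediate consequence of the preceding discussion, singling out that $k\eta_G$ is an isomorphism (Proposition~\ref{4.2}) as the key input, which is exactly the pivot you use: with $d_G=\kappa_{\doublewidehat G}\circ k\eta_G$ and $k\eta_G$ an isomorphism, (ii)$\iff$(iv); (i)$\iff$(ii) is Proposition~\ref{adjoint}; and (iii)$\iff$(iv) since $d_G$ is already a continuous bijection. Your caution about not needing $kG$ to be pro-Lie is well placed and your workaround via the $\kappa_{\doublewidehat G}\circ k\eta_G$ factorization is precisely the clean route the paper has in mind.
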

  
The Example $E$ of Theorem \ref{1.9} shows that these statements will
not imply that $\eta_G\colon G\to \doublewidehat G$ is an isomorphism.

\medskip

These aspects encourage us to ask the following question, which may be
an indication that the duality of abelian topological groups is still
a source of challenges:

\medskip
\noindent
{\bf Question.} \quad {\it Is the double dual
$\doublewidehat G =\Hom(\Hom(G,\RR/\ZZ),\RR/\ZZ)$ of an
 abelian pro-Lie group $G$ always a k-group?}

\end{document}